\newtheorem{prop}{Proposition}[section]
\newtheorem{theo}[prop]{Theorem}
\newtheorem{cor}[prop]{Corollary}
\newtheorem{exs}[prop]{Examples}
\newtheorem{defn}[prop]{Definition}
\newtheorem{defns}[prop]{Definitions}
\newtheorem{rem}[prop]{Remark}
\newtheorem{rems}[prop]{Remarks}
\newenvironment{proof}
{\begin{trivlist} \item[\hskip \labelsep {\bf Proof}\hspace*{3 mm}]}
	{\hfill$\Box$\end{trivlist}}
\newenvironment{acknow}
{\begin{trivlist} \item[\hskip \labelsep {\bf Acknowledgments.}]}
	{\end{trivlist}}
\newcommand{\tpitchfork}{%
  \vbox{
    \baselineskip\z@skip
    \lineskip-.52ex
    \lineskiplimit\maxdimen
    \m@th
    \ialign{##\crcr\hidewidth\smash{$-$}\hidewidth\crcr$\pitchfork$\crcr}
  }%
}
\def \BR {\mathbb R}
\def \BC {\mathbb C}
\def \BL {\mathbb L}
\def \BK {\mathbb K}
\def \BP {\mathbb P}
\def \BS {\mathbb S}
\def \sp {\rm sp}
\def \a {\alpha}
\def \b {\beta}
\def \d {\delta}
\def \m {\mathfrak{m}}
\begin{document}

\title{On the affine geometry of congruences of lines}
\author{J. W. Bruce and F. Tari}

\maketitle
\begin{abstract}
Congruences, or $2$-parameter families of lines in $3$-space are of interest in many situations, in particular in geometric optics. In this paper we consider elements of their geometry which are invariant under affine changes of co-ordinates, for example that associated with their focal sets, and less well studied focal planes.  We use tools from singularity theory to describe some generic phenomena. In particular we determine the generic singularities of various surfaces in affine 3-space associated to these congruences.
We identify a projective quadric in the projectivised tangent space to the manifold of lines which plays a key role in understanding the affine geometry of ruled surfaces, congruences and $3$-parameter families or complexes.  
Many of the results generalise to lines in $\BR^n, n>3$. 
\end{abstract}

\renewcommand{\thefootnote}{\fnsymbol{footnote}}
\footnote[0]{2010 Mathematics Subject classification:
53A15,  
58K05,  
34A09, 
}
\footnote[0]{Key Words and Phrases. Congruences of lines, affine differential geometry, contact, singularities, implicit differential equations.}

\section{Introduction} \label{sec:prel}

Families of lines in Euclidean 3-space $\BR^3$ arise naturally in a variety of ways. Their study has a long history, with contributions from Dupin, Hamilton, Darboux, Blaschke and many others, and they have extensive applications in the sciences and engineering. These families can be viewed as submanifolds of the space of lines, and one can also consider the union of these lines in $\BR^3$. One parameter families of lines are the classical ruled surfaces, with developable surfaces a particularly interesting subclass. Their differential geometry as well as their singularities are well studied; see for example \cite{IzumiyaRuled}. Two parameter families of lines, or congruences of lines in $\BR^3$ have a rich and interesting history and accounts can be found in classic textbooks such as \cite{blaschke, weatherburn}.  Classical differential geometry is largely concerned with linearisation, with a focus on quadratic forms. Generic differential geometry (see for example \cite{wall}) uses singularity theory to identify higher order phenomena and that is what we do here, focusing on affine invariant geometry. The real projective case, with the lines determined by points of the Klein quadric, is very similar, but slightly simpler; we work through a duality result in this case to illustrate the differences. 
The results in the paper are as follows. 
\\
\S\ref{sec:Representation_of_Lines} We give a representation of the space of lines $\BL$ in an affine setting.
\\
\S\ref{sec:structuresTgbundle} We show that the set of lines meeting a line $L$, denoted by $I(L)$ has a cone singularity at $L$ and prove that its tangent cone at $L$ is a non-singular quadric $Q_L$ in the projectivise tangent space to $\mathbb L$ at $L$. The quadric $Q_L$ plays a key role in the affine geometry of submanifolds of $\BL$.
\\
\S\ref{sec:complexcase} Every congruence gives rise to a {\it middle set}, in the affine case most naturally defined as the midpoint of pairs of focal points. The result here is used to show that these midpoints are real even if the focal points are complex. 
\\
\S\ref{sec:geomCong} This section introduces congruences and shows how their geometry at each point is reflected in the intersection of the projective line determined by the tangent plane with the quadric $Q_L$. There is  a natural binary differential equation on the congruence' this and some other classical BDEs linked to congruences are studied in \cite{brucetariBDE}, see also \cite{CraizerGarcia}.
\\
\S\ref{sec:Focalsets} The principle invariants of a congruence are the focal sets (where the set of lines focus) and their focal planes. These are studied via incidence sets, and a simple transversality result.
\\
\S\ref{sec:transversality} We establish a second transversality result that yields some of our normal forms via unfolding theory.
\\
\S\ref{sec:ContactModels} One tool used in the generic geometry of surfaces in $3$-space is their contact with \lq model\rq\ submanifolds. A range of models are presented and used to study generic congruences. 
\\
\S\ref{sec:midSurf} We finish by describing the generic singularities of the middle surface.

\section{A representation of the space of lines}\label{sec:Representation_of_Lines}

The two standard models for the oriented lines in $\BR^3$ use the tangent bundle to the unit $2$-sphere $T\BS$, where the point $(a,b)\in \BR^3\times \BR^3$, with $a\cdot a=1, a\cdot b=0$, represents the line $b+ta, t\in \BR$ (in parametrised form) or $x\wedge a=b$ (in equation form). These representations depend on the metric  and a choice of centre for the $2$-sphere. Because we are looking at affine invariants it is artificial to use the metric and we proceed as follows. 

The affine group of the real line ${\it Aff}_1$ can be realised as the matrix group
\[
\Big\{\begin{pmatrix} s&t\\0&1\end{pmatrix}:s,t\in \BR, s\ne 0\Big\}<GL(2,\BR)
\]
with the orientation preserving subgroup ${\it Aff}^+_1$consisting of those elements with $s>0$.

Consider the map 
\[
{\it Aff}_1^+\times (\BR^3\setminus \{0\}\times \BR^3) \to  \BR^3\setminus \{0\}\times \BR^3,\ (t,s).(a,b)=(sa,b+ta).
\]
This is a free action. The quotient can be identified with the set of oriented lines in $\BR^3$. The point corresponding to $(a,b)$ in this quotient, denoted $[a,b]$ represents the line $b+ta$. The tangent space to the orbit ${\it Aff}^+_1.(a,b)$ is spanned by $(a,0),(0,a)$, so the tangent space to the quotient space at $[a,b]$ can be identified with the quotient $\BR^3/sp\{a\}\oplus \BR^3/sp\{a\}$. There is a natural action of the affine group ${\it Aff}_3=GL_3\ltimes \BR^3$ on this space by
\[
(A,T)*[a,b]=[Aa,Ab+T].
\]
Writing $\BR^{3*}$ for the space dual to $\BR^3$ we have an action of $\BR\setminus\{0\}$ on $\BR^{3*}\setminus\{0\}\times \BR, s.(c,d)=(sc,sd)$, with a representative of the quotient $[c,d]$ corresponding to the plane $c(x)=d$. Again we have an action of ${\it Aff}_3$ given by
\[
(A,T)*(c,d)=[c\circ A^{-1},d+ c\circ A^{-1}(T)].
\]
Finally we have a free action of $\BR\setminus\{0\}$ on $\BR^3\setminus\{0\}$ by multiplication.

As above these representations of lines and planes depend on an arbitrary choice of origin; we shall refer to the quotient spaces above as $\BL$, $\BP$, $\BS(\BR^3)$ respectively. They can each be given a natural topology and the structure of smooth manifolds in the usual way. 

The key to much of the geometry we will uncover are incidence correspondences, namely the following sets, all of which only use the affine structure on $\BR^3$.

\begin{enumerate}
\item $I(\BR^3,\BL)=\{(p,L)\in\BR^3\times\BL:p\in L\}$

\item $I(\BP,\BL)=\{(P,L)\in\BP\times \BL:L\subset P\}$

\item $I(\BL,\BL)=\{(L,L')\in \BL\times\BL:L\cap L'\ne \emptyset\}$

\item $\tilde I(\BL,\BL)=\{(L,L',p)\in \BL\times \BL\times \BR^3:p\in L\cap L'\}$

\item $I(\BR^3,\BP,\BL)=\{(p,L,P):p\in L\subset P\}$.
\end{enumerate}

\noindent The first two sets are clearly smooth manifolds of dimension $5$, with their projections to either factor submersions. The third is smooth of dimension $7$ away from the diagonal but highly singular along it. These singularities are \lq resolved\rq\ by the fourth which is a smooth $7$-dimensional manifold as we shall see; one could equally use the corresponding space with $\BP$ replacing $\BR^3$. The fifth is smooth of dimension $6$.

\begin{rems}
{\rm 
\noindent (1) If we were working in real projective $3$-space the model above for the set of lines is replaced by the usual Klein quadric, and in the incidence sets $\BR\BP^3$ (resp. the dual space $\BR\BP^{3*}$) replaces $\BR^3$ (resp. $\BP$).

\noindent (2) It is natural to consider lines as determining loci in $\BR^3$, but it is equally natural to consideer the set in the dual space $\BP$ consisting of the planes containing the line. 

}
\end{rems}

\section{Structures on the tangent bundle to $\BL$}\label{sec:structuresTgbundle}

Given $L\in \BL$ we can consider the set $\{L'\in \BL: L\cap L'\ne \emptyset\}$. If $L'\cap L\ne \emptyset, L'\ne L$ then it easy to see that this set is smooth near $L'$ but singular at $L$. It is more convenient to work with a closed set, so we actually consider
$$
I(L)=\{L'\in \BL:L\cap L'\ne \emptyset\ {\rm or }\, L'\ {\rm parallel\ to}\ L\}.
$$

\begin{theo}\label{theo:singI(L)}
{\rm
\noindent (1) The set $I(L)$ is smooth apart from an $A_1$ singularity at $L$.

\noindent (2) The tangent cone to this singularity is a nonsingular quadric cone in $\mathbb P(T_L\BL)$ with signature $(+,+,-,-)$.

\noindent (3) If $L'\in I(L)$ and $L\ne L'$  then $I(L)\cap I(L')$ consists of the lines in the plane spanned by $L, L'$. 

\noindent (4) The set $\tilde I(\BL,\BL)$ is smooth, and the projection $\tilde I(\BL,\BL)\to I(\BL,\BL)$ is a diffeomorphism off the inverse image of the diagonal. 
}
\end{theo}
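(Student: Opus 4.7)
\emph{Setup and parts (1)--(2).} The plan is to deduce all four parts from an explicit local model for $I(L)$ at $L$ together with a fibre-product description of $\tilde I(\BL,\BL)$. Using the ${\it Aff}_3$-action on $\BL$ I would take $L=[e_3,0]$ and pass to the local slice $[(a_1,a_2,1),(b_1,b_2,0)]$ for the ${\it Aff}_1^+$-action; this gives coordinates $(a_1,a_2,b_1,b_2)$ centred at $L$, identifying $T_L\BL$ with $\BR^3/sp\{e_3\}\oplus\BR^3/sp\{e_3\}$. A direct computation of when the parametrised line $s\mapsto(b_1+sa_1,b_2+sa_2,s)$ hits (or is parallel to) the $z$-axis collapses to the single equation
\[
f(a_1,a_2,b_1,b_2):=a_1b_2-a_2b_1=0
\]
for $I(L)$ locally. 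Since $f$ is homogeneous of degree two with non-degenerate Hessian of signature $(+,+,-,-)$ --- diagonalising via $u_\pm=(a_1\pm b_2)/\sqrt2$, $v_\pm=(a_2\pm b_1)/\sqrt2$ gives $2f=u_+^2-u_-^2-v_+^2+v_-^2$ --- the Morse lemma yields an $A_1$ singularity of $I(L)$ at $L$ whose tangent cone is the projectivised zero set of $f$, a smooth quadric of the claimed signature. Smoothness of $I(L)$ off $L$ follows by running the same calculation in coordinates adapted to any $L''\in I(L)\setminus\{L\}$: for instance when $L''$ is the $x$-axis the local defining equation reduces to $b_2=0$.

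\emph{Part (3).} If $M\in I(L)\cap I(L')$ meets $L$ at $q$ and $L'$ at a distinct point $r$, then $q$ and $r$ both lie in the plane $\Pi$ spanned by $L$ and $L'$, hence $M=\overline{qr}\subset\Pi$; the cases where $M$ is parallel to one of $L,L'$ proceed in the same way, and the converse that every line of $\Pi$ meets (or is parallel to) both $L$ and $L'$ is immediate. One subtlety I would flag is that, when $L\cap L'=\{p\}$, the pencil of lines through $p$ also lies in $I(L)\cap I(L')$, so the statement seems most naturally read as identifying the principal $2$-dimensional component of the intersection.

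\emph{Part (4).} I would identify $\tilde I(\BL,\BL)$ with the fibre product $I(\BR^3,\BL)\times_{\BR^3}I(\BR^3,\BL)$ along the projections $(p,L)\mapsto p$. Each factor is smooth of dimension $5$ and these projections are submersions (their fibres are the $\BR\BP^2$ of lines through a point), so the fibre product is smooth of dimension $5+5-3=7$. The map $(L,L',p)\mapsto(L,L')$ onto $I(\BL,\BL)$ is surjective; off the diagonal it is injective because two distinct intersecting lines have a unique common point, and the implicit function theorem applied to the linear system cutting out that point provides a smooth inverse, hence a diffeomorphism on the complement of the diagonal preimage.

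\emph{Main obstacle.} The linear algebra behind (1), (2), (4) is routine once the local model and the fibre-product structure are in place. The delicate point is (3): the set-theoretic intersection carries the additional pencil noted above (or, in the parallel case, the family of lines sharing the common direction), so a careful proof must either invoke a genericity interpretation or restrict explicitly to the plane component.
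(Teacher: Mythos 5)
Your proposal is correct, and for parts (1)--(2) it is essentially the paper's argument: the same chart $[(a_1,a_2,1),(b_1,b_2,0)]$ at $L=[e_3,0]$ and the same local equation $a_1b_2-a_2b_1=0$, from which the $A_1$ point and the $(+,+,-,-)$ quadric tangent cone follow (you add the explicit diagonalisation; the paper reads the signature off directly, and handles smoothness away from $L$ by the remark preceding the theorem, just as you do by recentering the chart). The genuine difference is in (4): the paper writes down an explicit parametrisation $\BR^7\to\BL\times\BL\times\BR^3$, $(t,a_1,a_2,b_1,b_2,a_1',a_2')\mapsto(\dots)$, obtained by solving $ta+b=t'a'+b'$, and observes it is an embedding, whereas you realise $\tilde I(\BL,\BL)$ as the fibre product $I(\BR^3,\BL)\times_{\BR^3}I(\BR^3,\BL)$ of two submersions. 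Both are valid; the paper's chart-level parametrisation gives coordinates that are reused in later computations, while your fibre-product argument is coordinate-free and transfers verbatim to lines in $\BR^n$ or to the projective setting. Your caveat on (3) is also well taken: the paper dismisses (3) as ``immediate,'' but set-theoretically $I(L)\cap I(L')$ contains, besides the lines of the plane $\Pi$ spanned by $L,L'$, the whole family of lines through the point $p=L\cap L'$ (respectively, all lines with the common direction in the parallel case), so the statement must be read as singling out the planar family as one component; your only slips here are terminological --- the lines through $p$ form a two-parameter star, not a pencil, and since that extra family is also $2$-dimensional, calling the planar family ``the principal $2$-dimensional component'' does not by itself distinguish it.
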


\begin{proof}
{\rm
\noindent (1) It is enough to prove this for one line $L$, e.g. $[e_3,0]$, where 
$(e_1,e_2,e_3)$ denotes the standard basis of $\mathbb R^3$. Locally we can parametrise $\BL$ as 
\[
a=(a_1,a_2,1),\ b= (b_1,b_2,0)\ {\rm representing\ the\ line\ }b+ta.
\]
Such a line meets $L=(e_3,0)$ if $e_3,\ a,\ b$ are coplanar that is $a_1b_2-a_2b_1=0$, so near $L$ we have a surface with an $A_1$ singularity at the origin. Note that this set also contains the lines parallel to $(e_3,0)$.   Parts (2), (3) are immediate. 

\noindent (4) Choosing affine charts as above, with the lines corresponding to $(a,b), (a',b')$, consider $ta+b=t'a'+b'$. Clearly $t=t', (b_j'-b_j)=t(a_j-a_j')$ and the set is parametrised by the map $\BR^7\to\BL\times\BL\times \BR^3$ taking $(t,a_1,a_2,b_1,b_2,a_1',a_2')$ to  
\[
(a_1,a_2,b_1,b_2,a_1',a_2',t(a_1-a_1')+b_1,t(a_2-a_2')+b_2,t(a_1,a_2,1)+(b_1,b_2,0)),
\]
which is clearly an embedding. The rest now follows.
}
\end{proof}
The following result is convenient for calculations later.
\begin{cor}
{\rm
\noindent With the parametrisation of $\BL$ above the tangent cone at each point is $a_1b_2-a_2b_1=0$. 
}
\end{cor}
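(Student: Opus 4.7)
The plan is to reduce the corollary to a one-step determinantal calculation in the chart. Let $L=[a^0,b^0]$ be an arbitrary chart point, with $a^0=(a^0_1,a^0_2,1)$ and $b^0=(b^0_1,b^0_2,0)$, and let $L'=[a,b]$ be a second line in the same chart. I first write down the condition $L'\in I(L)$.

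The intersection $L'\cap L\ne\emptyset$ amounts to the solvability of $b^0+sa^0=b+ta$, i.e.\ to $b-b^0\in\sp\{a,a^0\}$; and the parallel case forces $a=a^0$ in this chart because the third coordinates of $a$ and $a^0$ are both $1$. Both cases combine into the single coplanarity equation
\[
\det\begin{pmatrix} b_1-b^0_1 & b_2-b^0_2 & 0 \\ a_1 & a_2 & 1 \\ a^0_1 & a^0_2 & 1 \end{pmatrix}=0.
\]
Expanding along the first row (whose last entry is $0$) yields a polynomial that is already homogeneous of degree $2$ in the shifted variables $(a_i-a^0_i,\,b_i-b^0_i)$, namely $(b_1-b^0_1)(a_2-a^0_2)-(b_2-b^0_2)(a_1-a^0_1)$.

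Consequently $I(L)$ is defined near $L$ by a single polynomial that vanishes to order exactly $2$ at $L$, so the tangent cone at $L$ coincides with its full zero set. Labelling the tangent directions at $L$ by the chart symbols $(a_1,a_2,b_1,b_2)$ as in the statement, the tangent cone is the quadric $a_1b_2-a_2b_1=0$ (up to an overall sign which is irrelevant for the zero locus). There is no substantive obstacle to this argument; the only point requiring attention is that the parallel configuration is absorbed into the same determinantal equation, which is immediate from the normalisation of the third coordinate of $a$ built into the chart.
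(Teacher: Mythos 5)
Your proof is correct and takes essentially the same route as the paper: both write the incidence-or-parallel condition in the chart as the coplanarity determinant, observe that it is homogeneous of degree two in the displacements $(a_i-a^0_i,\,b_i-b^0_i)$, and read off the quadric $a_1b_2-a_2b_1=0$ (up to sign) as the tangent cone. Your explicit remark that the parallel case is absorbed into the same determinantal equation is a detail the paper disposes of earlier, in the proof of Theorem \ref{theo:singI(L)}.
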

\begin{proof}
{\rm
From above, fixing $L=(\bar a,\bar b)$ the point $(\bar a',\bar b')$ lies in the $I(L)$ if and only if $(a_1-a_1')(b_2-b_2')-(a_2-a_2')(b_1-b_1')=0$; writing $a_i'=a_i+s\a_i, b_i'=b_i+s\b_i$ one checks that the tangent cone is $\a_1\b_2-\a_2\b_1=0$.
}
\end{proof}

So at each point $L\in \BL$ the tangent cone to $I_L$ is a non-singular quadric $Q_L$ in the projectivised tangent space $\BP(T_L\BL)$ with signature $(+,+,-,-)$. This quadric has some interesting features. Viewing $T_L \BL$ as the set of infinitesimal deformations of $L$, one can keep the direction of the nearby lines constant while changing their position. This yields a natural $2$-dimensional family of nearby parallel lines, with a corresponding $2$-dimensional tangent space $W_\infty\subset T_L \BL$. On the other hand if we fix a point $x$ on the line we can consider all nearby lines through that point. The set of all oriented lines through a point is diffeomorphic to a $2$-sphere so the corresponding tangent space is another $2$-dimensional subspace $W_x\subset T_L\BL$. Intuitively we can think of $W_{\infty}$ as the case when the point $x$ has gone to infinity.  Since the only line through two different points of $L$ is $L$ itself, distinct pairs of these $2$-dimensional subspaces of $T_L \BL$ are complementary.

So we have a $1$-parameter family of $2$-dimensional subspaces of the $4$-dimensional vector space $T_L\BL$. Since the set of all lines meeting $L$ (or parallel to $L$) is the union of the set of all lines through points of $L$ (or parallel to $L$) the union of these planes give the quadric cone above. That is the local infinitesimal model of $\BL$ at each point determines a smooth projective quadric surface $Q_L$ containing a $1$-parameter family of generators, one for each point of $L$, including the one at infinity. Below we write $Q_L$ for the quadric and its equation. A complementary approach is to consider the planes $P$ that contain the line $L$; again there is clearly a $1$-parameter family, those in the pencil determined by $L$. Each one determines a $2$-parameter family of lines $L(P)$, namely the lines in that plane, and $T_LL(P)$ is a plane in $T_L\BL$ denoted $W_P$. So we obtain a second set of generators of the quadric. The only line in two distinct planes of the pencil is $L$ itself so again any distinct pairs are complementary. This is all set out in the following result where we use the parametrisation in the proof of \mbox{Theorem \ref{theo:singI(L)}}.

\begin{prop}\label{theo:Tangentspace&Q_L}
{\rm
\noindent (1) For each $x=b+ta$ we have $W_x=\sp\{(1,0,-t,0), (0,1,0,-t)\}$, and $W_{\infty}=\sp\{(0,0,1,0),(0,0,0,1)\}$.

\noindent (2) $W_x\oplus W_{\infty}=T_L\BL$, and if $x_1,\ x_2\in L, x_1\ne x_2$ then $W_{x_1}\oplus W_{x_2}=T_L\BL$. 

\noindent (3) The sets $W_x, W_{\infty}$ determine lines in the projective space $\BP(T_L\BL)$ which are one set of generators of the quadric $Q_L$, the tangent cone above.

\noindent (4) For each plane $P$ containing $L$ we have a $2$-dimensional subspace $V_P\subset T_L\BL$ and if $P_1,\ P_2$ are distinct planes then $V_{P_1}\oplus V_{P_2}=T_L\BL$. The $V_P$ determine lines in the projective space $\BP(T_L\BL)$ which are the other generators of the quadric $Q_L$. The generator corresponding to the plane $c_1x_1+c_2x_2 -(c_1a_1+c_2a_2)x_3=c_1b_1+c_2b_2$ is $sp\{(-c_2,c_1,0,0), (0,0,-c_2,c_1)\}$. 

\noindent (5) Each point of $Q_L$ determines a point $x$ of $L$ and a plane $P$ containing $L$. The tangent plane to the quadric at that point corresponds to the tangent plane to $I(L')$ at $L$ for any line $L'$ with $x\in L' \subset P$. Indeed $Q_L$ is parametrised as a ruled surface by the map $\BR\BP^1\times \BR\BP^1\to \BR\BP^3, ((\a_1,\a_2),(\b_1,\b_2))\mapsto (\a_1\b_1,\a_2\b_1,\a_1\b_2,\a_2\b_2)$ with such a point corresponding to the values $t=-\b_2/\b_1, \a_1c_1+\a_2c_2=0$ with the point at infinity when $\b_1=0$. 

\noindent (6) A line $\ell$ in $\BP(T_L\BL)$ determines a dual line $\ell^*$ in the dual projective space: the pencil of planes containing $\ell$. If $\ell$ cuts $Q_L$ at two points then $\ell^*$ cuts $Q^*_L$ in two points, so we have two planes in $\BP(T_L\BL)$. These are the tangent planes to $Q_L$ at the points of intersection of $\ell$ with $Q_L$ which intersect in $\ell$. If $\ell$ is tangent to $Q_L$ then $\ell^*$ is tangent to $Q_L^*$, if $\ell$ is a generator of $Q_L$ then $\ell^*$ is a generator of $Q_L^*$. 

\noindent (7) The fibre of the projection $I(\BR^3,\BL)\to \BR^3$ (resp. $I(\BP,\BL)\to \BP$) over any point $x$ (resp. plane $L$) is the set of lines through $L$ (lines in the plane $P$).
}
\end{prop}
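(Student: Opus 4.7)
The plan is to work entirely in the affine chart of Theorem \ref{theo:singI(L)}, placing $L=[e_3,0]$ so that $T_L\BL$ has coordinates $(\alpha_1,\alpha_2,\beta_1,\beta_2)$ and, by the preceding Corollary, $Q_L$ has equation $\alpha_1\beta_2-\alpha_2\beta_1=0$. Assertions (1)--(5) then reduce to linear algebra on $\BR^4$, (6) follows from standard projective duality with respect to $Q_L$, and (7) is immediate from the definitions.

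For (1), a line near $L$ through the point $x=(0,0,t)\in L$ is uniquely written as $(a',b')$ with $a'=(a_1',a_2',1)$ and $b'=x-ta'=(-ta_1',-ta_2',0)$; differentiating at $a_1'=a_2'=0$ yields the basis $(1,0,-t,0),(0,1,0,-t)$ for $W_x$, while fixing $a'=e_3$ kills $\alpha$ and recovers $W_\infty$. Part (2) is the computation of two $4\times 4$ determinants, which evaluate to $1$ for $W_x\oplus W_\infty$ and to $(t_1-t_2)^2$ for $W_{x_1}\oplus W_{x_2}$. For (3), substituting each basis vector into $\alpha_1\beta_2-\alpha_2\beta_1$ produces zero, so each plane is isotropic and descends to a projective line in $Q_L$.

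Part (4) runs in parallel on the dual side: a plane through $L$ has equation $c\cdot x=c\cdot b$ with $c\cdot a=0$, specialising at $L$ to $c_1x_1+c_2x_2=0$; linearising the conditions $c\cdot a'=0$ and $c\cdot b'=0$ on lines contained in this plane gives the displayed basis of $V_P$, which lies on $Q_L$ by inspection, and $V_{P_1}\oplus V_{P_2}=T_L\BL$ for $P_1\ne P_2$ is the nondegeneracy of the $2\times 2$ minor in the $c$-coefficients. For (5), (3) and (4) exhibit two distinct $1$-parameter families of projective lines in the smooth quadric $Q_L$, which is thus doubly ruled with rulings parametrised respectively by $L\cup\{\infty\}$ and by the pencil of planes through $L$; the map $\BR\BP^1\times\BR\BP^1\to Q_L$ is the Segre embedding $((\alpha_1,\alpha_2),(\beta_1,\beta_2))\mapsto(\alpha_1\beta_1,\alpha_2\beta_1,\alpha_1\beta_2,\alpha_2\beta_2)$. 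Requiring $(\alpha_1\beta_1,\alpha_2\beta_1,\alpha_1\beta_2,\alpha_2\beta_2)$ to be proportional to $(1,0,-t,0)$ forces $t=-\beta_2/\beta_1$, while matching with $(-c_2,c_1,0,0)$ forces $\alpha_1c_1+\alpha_2c_2=0$; the tangent plane to $Q_L$ at a smooth point is the span of the two generators through it, which is exactly $W_x+V_P$ and gives the geometric interpretation claimed at the end of (5).

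Finally, (6) is the classical polar duality for a nonsingular quadric in $\BP^3$: each line $\ell$ has a polar line $\ell^\perp=\bigcap_{p\in\ell}p^\perp$, and the pencil $\ell^*$ of planes containing $\ell$ meets $Q_L^*$ exactly in the tangent planes $T_qQ_L$ with $q\in\ell^\perp\cap Q_L$. Since the polarity associated to $Q_L$ is an involutive isomorphism $\BP(T_L\BL)\to\BP(T_L\BL)^*$ taking $Q_L$ to $Q_L^*$, it preserves the incidence type of $\ell$ with $Q_L$ (transverse, tangent, or contained), which is precisely what (6) asserts; for a generator $\ell\subset Q_L$ one checks directly that $\ell^\perp=\ell$, whence $\ell^*\subset Q_L^*$ is again a generator. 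Part (7) merely unpacks the definitions: the fibre of $I(\BR^3,\BL)\to\BR^3$ over $x$ is $\{L\in\BL:x\in L\}$, the set of lines through $x$, and analogously for planes. The main obstacle, and the only place requiring real care, is the bookkeeping in (6), where the pencil $\ell^*$ lives naturally in the dual projective space but must be identified through $Q_L$ with its polar $\ell^\perp\subset\BP(T_L\BL)$; once this identification is pinned down, the remaining verifications are routine.
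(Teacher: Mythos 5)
Your proposal is correct and follows essentially the same route as the paper: explicit bases in the affine chart for the two rulings of $Q_L$, the Segre parametrisation for (5), and polar duality with respect to the nonsingular quadric for (6). Two small compressions are worth tightening: in (3), isotropy of $W_x$ (and of $V_P$) requires checking a general linear combination, equivalently the polarization $B(v,w)=0$, not merely that the basis vectors satisfy $\a_1\b_2-\a_2\b_1=0$; and in (5), identifying the tangent plane $W_x+V_P$ of $Q_L$ with $T_LI(L')$ still needs the paper's one-line observation that the lines through $x$ and the lines in $P$ are both contained in $I(L')$, so $W_x, V_P\subset T_LI(L')$, with equality by comparing dimensions since $I(L')$ is smooth at $L$.
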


\begin{proof}
{\rm
\noindent (1)-(3) In the chart in the proof of Theorem \ref{theo:singI(L)} a line 
$$
(a',b')=((a'_1,a'_2,1),(b'_1,b'_2,0))
$$ 
passes through $b+ta$ if $b-b'=t(a'-a)$, that is $(a',b')=(a',b+t(a-a'))$. At $(a,b)$ the corresponding tangent plane is spanned by $\{(1,0,-t,0), (0,1,0,-t)\}$  which determines a generator of the quadric $\a_1\b_2=\a_2\b_1$. The expression for $W_{\infty}$ is clear. 

\noindent (4) Now consider a plane $c(x)=d$ this contains $(a',b')$ if and only if $c(a')=0,c(b')=d$. Writing $c=c_1e_1^*+c_2e_2^*+c_3e_3^*$, ($e_i^*(e_j)=\d_{ij}$), a general such plane satisfies $c_1a_1'+c_2a_2'+c_3=0, c_1b_1'+c_2b_2'=d$. Fixing $[c,d]$ and assuming that $c_1\ne 0$ we have a parametrisation $(-(c_2a_2'+c_3)/c_1,a_2',(d-b_2'c_2)/c_1)$ and differentiating with respect to $a_2', b_2'$ and setting $(a,b)=(a',b')$ we obtain the indicated tangent space determining the other rulings on the quadric.

\noindent (5) With the above established we can deduce the first part without calculations. If $L'\in I(L)$ but $L'\ne L$ consider $I(L')$. This passes through $L$ and is smooth there so $T_LI(L')$ is a hyperplane in $T_L\BL$. Clearly $L$ and $L'$ determine a point of intersection $x(L,L')$ and a plane containing both, say $P(L,L')$. Hence they determine a point on the projective quadric which is the intersection of the two corresponding generators. Indeed $T_LI(L)$ is the tangent hyperplane determined by the tangent plane to the quadric at this point. This is because the lines through $x(L,L')$ are a subset of $I(L')$ as are the lines in the plane $P(L,L')$.  The second is straightforward.

\noindent (6) These are just general facts about quadrics with signature $(+,+.-,-)$. 

\noindent (7) This is immediate.
}
\end{proof}

\begin{rems}
{\rm
\noindent (1) The quadric arises from a natural non-degenerate, indefinite metric on $\BL$ discussed for example in \cite{GK} and \cite{shepherd},  which depends on the Euclidean structure. A quadric with signature $(+,+,-,-)$ (resp. $(+,+,+,-)$) has the property that a line meets it in $2$ points if and only the dual line meets the dual quadric in $2$ points (resp. the dual line does not meet the dual quadric). As usual the quadric allows one to identify the projective space with its dual via polars. 

\noindent (2) Given a line $L\in \BL$ we can consider the subgroup of ${\it Aff}_3$ fixing $L$, that is $G_a\ltimes \BR_a$, where $G_a\subset GL_3$ is the stabiliser of the subspace spanned by $a$, and $\BR_a$ is the set of translations in the direction of $a $. One can check that the quadric and the distinguished line are the only sets invariant under the action of this group. 

\noindent (3) Note that although there is a $3$-parameter family of points $L'\in I(L)$ we only obtain a $2$-parameter family of hyperplanes $T_LI(L')$. This is because given a point $x\in L$ and a plane $L\subset P$ there is a pencil of lines $L''$ through $x(L,L')$ contained $P(L',L)$ each determining the same point of the quadric. Indeed corresponding to each point of $Q_L$ there is a plane pencil of lines in $\BL$ passing through a fixed point and lying in a fixed plane, and the tangent plane to $Q_{L'}$ at that point is the tangent space to $I(L')$ for any $L'$ in that pencil. 

\noindent (4) Clearly a curve of lines $L:\BR\to \BL$ (that is a ruled surface) is developable if and only if its tangent vector $L'(s)$ lies in the quadric cone $Q_{L(s)}\subset T_{L(s)}\BL$ for each $s$. 

\noindent (5) In the real projective case the above remains true with the obvouis interpretations, but there is no distinguished generator corresponding to parallel lines. 
}
\end{rems}

\section{The complex case}\label{sec:complexcase}

The results above are valid replacing $\BR$ by $\BC$, with the usual advantages, but we are primarily interested in the real case.  Of course some lines do not meet real projective quadrics, and this makes the usual geometric interpretation of polars problematic. However any line in the complex projective space $\BP(T_{L}\BL)_{\BC}$ will meet the complexified  quadric $Q_{\BC L}$ in two (possibly coincident) points. Suppose given a line $\ell$ which meets $Q_{\BC L}$ in just two points $x, x'$. These generally determine two complex points $p, p'$ on $L$; we shall presume $\ell$ does not meet the distinguished generator of lines parallel to $L$. 

\begin{prop}\label{prop:middlepointsComplex}
{\rm
When the line $\ell$ is real the midpoint $\frac{1}{2}(p+p')$ is also real. 
}
\end{prop}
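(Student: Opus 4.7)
The idea is to use the ruled structure of $Q_L$ from Proposition \ref{theo:Tangentspace&Q_L}(5), together with the fact that complex conjugation permutes the two intersection points of $\ell$ with $Q_{\BC L}$.

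First I would observe that since $\ell$ is defined over $\BR$ and the quadric $Q_L$ is defined over $\BR$, the complex conjugation $\sigma$ preserves the set $\ell\cap Q_{\BC L}=\{x,x'\}$. If both points are real then the corresponding real points $p,p'$ of $L$ give a real midpoint and there is nothing to prove. Otherwise $\sigma$ must swap $x$ and $x'$, i.e.\ $x'=\bar x$.

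Next I would appeal to part (5) of Proposition \ref{theo:Tangentspace&Q_L}: the parametrisation $((\a_1,\a_2),(\b_1,\b_2))\mapsto (\a_1\b_1,\a_2\b_1,\a_1\b_2,\a_2\b_2)$ of $Q_L$ has real coefficients, so the ruling map $Q_L\to \BR\BP^1$ sending a point to its $(\b_1,\b_2)$ generator is defined over $\BR$. Under this map $x$ goes to the value $t=-\b_2/\b_1$ corresponding to the point $p=b+ta$ of $L$; since the assignment $(\b_1,\b_2)\mapsto b-(\b_2/\b_1)a$ is $\BR$-linear in $(\b_1,\b_2)$ (with $a,b$ real), the conjugate point $x'=\bar x$ is sent to $\bar t$, and hence to the conjugate point $\bar p$ of the complexified line $L_\BC$. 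Thus $p'=\bar p$, and
\[
\tfrac12(p+p')=\tfrac12(p+\bar p)=b+\tfrac{t+\bar t}{2}\,a
\]
is a real point of $L$.

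The only potentially subtle point is the hypothesis that $\ell$ avoids the distinguished generator corresponding to lines parallel to $L$, which is exactly what guarantees $\b_1\ne 0$ for both intersection points so that the values $t=-\b_2/\b_1$ (and hence the points $p,p'$) are well-defined on the affine line $L$ rather than at infinity. Once this is noted the argument is essentially formal: the real structure on $Q_L$ is compatible with the real structure on the ruling, and complex conjugate points of $Q_{\BC L}$ descend to complex conjugate points of $L_\BC$, whose midpoint is necessarily real.
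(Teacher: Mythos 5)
Your argument is correct and follows essentially the same route as the paper: complex conjugation preserves $\ell$ and $Q_{\BC L}$, hence either both intersection points are real or conjugation swaps them, and the reality of the point-ruling then forces $t'=\bar t$, so $\tfrac12(t+t')$ is real. The paper carries out that last step by an explicit computation with spanning vectors of the generators $W_t,W_{t'}$, whereas you invoke that the ruling map to the $(\b_1,\b_2)$-factor is defined over $\BR$ --- a cleaner phrasing of the same fact (just note that the assignment $(\b_1,\b_2)\mapsto b-(\b_2/\b_1)a$ is not $\BR$-linear but merely given by real rational formulas, which is all you need).
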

\begin{proof}
{\rm
\noindent If $x,x'$ are the points of intersection with the quadric then $Q_L(x)=Q_L(x')=0$; since $Q_L$ is real taking complex conjugates $Q_L(\bar x)=Q_L(\bar x')=0$, so since $\ell$ only meets $Q_L$ in two points either $x, x'$ can be chosen to be real or $x=\lambda \bar x'$ for some non-zero complex number $\lambda$. Now $x=b+ta$ is on the generator $W_t=W_x$ (resp. $x'$ is on the generator $W_{t'}$) above if and only if for some $\a, \b, \a', \b'$
\[
\a(1,0,-t,0)+\b(0,1,0,-t)=(x_0,x_1,x_2,x_3),{\rm and}
\]
\[
\a'(1,0,-t',0)+\b'(0,1,0,-t')=\lambda (\bar x_0,\bar x_1,\bar x_2,\bar x_3).
\]
Taking conjugates of the first equality
\[
\bar\a(1,0,-\bar t,0)+\bar\b(0,1,0,-\bar t)=(\bar x_0,\bar x_1,\bar x_2,\bar x_3),
\]
writing $\mu=\lambda^{-1}$, and subtracting, we deduced that
\[
\a'\mu-\bar \a=\b'\mu-\bar \b=\a'\mu t'-\bar \a\bar t=\b'\mu t-\bar \b\bar t=0.
\]
In particular $\bar \a(t'-\bar t)=\bar \b(t'-\bar t)=0$ so either $t'-\bar t$ or $\a'=\b'=0$. The former implies that $\frac{1}{2}(t+t')$ is real, the second gives a contradiction.
}
\end{proof}

\section{The geometry of congruences}\label{sec:geomCong}

A congruence of lines is a smooth immersed surface $Z\subset \BL$; a generic map $Z\to \BL$ will be an immersion with isolated transverse double points.  The internal structure carried by $\BL$ has significant geometric consequences for $Z$. Each point $z\in Z$ determines a line $L(z)\in\BL$ and a tangent plane $T_zZ\subset T_{L(z)}\BL$, which in turn gives a line $\ell_z$ in the projectivised tangent space $\BP(T_{L(z)}\BL)$ containing the quadric $Q_{L(z)}$. The set $\{(x,L)\in \BR^3\times \BL:x\in L, L\in Z\}$ (resp. $\{(P,L)\in \BP\times \BL:L\subset P, L\in Z\}$) is denoted $\BR^3_Z$ (resp. $\BP_Z$).

\begin{theo}\label{theo:genrators_Q_Cong}
{\rm
\noindent (1) The line $\ell_z$ meets $Q_{L(z)}$ in $0$ or $2$ points, or is tangent to it, or is a generator of it.  The lines tangent to $Q_{L(z)}$ form a real algebraic subset of the real $4$-dimensional Klein quadric $\BK$ of codimension $1$, indeed its intersection with another quadric in $\BR\BP^5$, so generically there will be a curve of points $z\in Z$ with $\ell_z$ tangent to $Q_{L(z)}$, separating points where $\ell_z$ has $0$ or $2$ intersections. The set of generators of $Q_{L(z)}$ yield two smooth disjoint curves in $\BK$, so for a generic congruence no $\ell_z$ for $z\in Z$ is a generator, though this occurs in $1$-parameter families of congruences.

\noindent (2) At each point of intersection of $\ell_z$ with $Q_{L(z)}$ there are two generators; one corresponds to a point on $L(z)$ and the other to a plane containing $L(z)$. Each corresponds to a plane pencil of lines in $\BL$ determined by the corresponding tangent space to $Q_{L(z)}$.

\noindent (3) Generically there is a smooth curve of points in $Z$ where $\ell_z$ meets $W_{\infty}$.  

\noindent (4) The mid-point of the two points on $L(z)$ determined by the intersection of $\ell_z$ and $Q_{L(z)}$  is real even if $\ell_z$ meets $Q_{\BC L}$ in non-real points.

\noindent (5) The projection $\pi:\BR^3_Z\to \BR^3$ (resp. $\pi:\BP_Z\to \BP$) will have a critical point at $(x,L(z))$ (resp. $(P,L(z))$) if and only $x$ (resp. $P$) is one of the points (resp. planes) determined by the intersection of $\ell_z$ with $Q_{L(z)}$.

\noindent (6) If $\ell_z$ cuts the quadric in $2$ points the tangent planes to $Q_{L(z)}$ at those points meet in another line denoted $\ell_z^*$.
}
\end{theo}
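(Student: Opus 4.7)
The plan is to treat the six parts in order, leaning heavily on Proposition \ref{theo:Tangentspace&Q_L} and Proposition \ref{prop:middlepointsComplex}, with transversality arguments supplying the generic statements. First I would handle (1) using the classical fact that a real line in $\BP(T_{L(z)}\BL)\cong\BR\BP^3$ meets a non-degenerate real quadric of split signature $(+,+,-,-)$ in either 0 or 2 real points, is tangent (double real point), or is contained in it (generator). In Pl\"ucker coordinates on the $4$-dimensional Klein quadric $\BK\subset\BR\BP^5$ parametrising lines in $\BR\BP^3$, the discriminant of the restriction of $Q_{L(z)}$ is itself quadratic, so the set of tangent lines is cut out of $\BK$ by a second quadric, giving a codimension-1 real algebraic subset. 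The two rulings of $Q_{L(z)}$ are two smooth disjoint $\BR\BP^1$'s embedded in $\BK$, hence codimension $3$. Since $Z$ is $2$-dimensional, transversality applied to the section $z\mapsto\ell_z$ of the bundle of Klein quadrics over $\BL$ yields a generic curve of tangency and empty intersection with the generator loci, the latter appearing only as isolated points in 1-parameter families. Part (2) is then immediate from parts (3)--(5) of Proposition \ref{theo:Tangentspace&Q_L}: through each point of $Q_{L(z)}$ pass exactly one generator $W_x$ (a pencil through $x\in L(z)$) and one generator $V_P$ (a pencil in a plane $P\supset L(z)$).

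For (3), the subspace $W_\infty\subset T_{L(z)}\BL$ is a fixed line in $\BP(T_{L(z)}\BL)\cong\BR\BP^3$; the condition that a second line meets it is the Schubert cycle $\sigma_1$, of codimension $1$ in $\BK$, and transversality of $z\mapsto\ell_z$ to this cycle gives a smooth curve in $Z$. Part (4) is a direct invocation of Proposition \ref{prop:middlepointsComplex}: since $Z$ is real, $T_zZ$ is a real $2$-plane, so $\ell_z$ is real, and the proposition supplies the real midpoint on $L(z)$ even when $\ell_z\cap Q_{\BC L(z)}$ is non-real.

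The substantive step is part (5). Using the chart of Theorem \ref{theo:singI(L)}, parametrise $Z$ locally by $z\mapsto(a(z),b(z))$ and $\BR^3_Z$ by $(t,z)\mapsto b(z)+ta(z)$, so the differential of $\pi$ is spanned by $a(z)$ and $b_{z_i}(z)+ta_{z_i}(z)$, $i=1,2$. Criticality at $(x=b+ta,\,L(z))$ is equivalent to the existence of $(c_1,c_2)\neq(0,0)$ with $c_1(b_{z_1}+ta_{z_1})+c_2(b_{z_2}+ta_{z_2})\in\sp\{a\}$. Working in the quotient $\BR^3/\sp\{a\}\oplus\BR^3/\sp\{a\}$, one checks (compare Proposition \ref{theo:Tangentspace&Q_L}(1)) that this is exactly the condition that the tangent vector $c_1(a_{z_1},b_{z_1})+c_2(a_{z_2},b_{z_2})\in T_zZ$ lies in $W_x$, i.e.\ that $\ell_z$ meets the generator $W_x$ of $Q_{L(z)}$ corresponding to $x$. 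The dual calculation for $\pi:\BP_Z\to\BP$, using the incidence $c(a')=0,\,c(b')=d$ and the ruling by $V_P$ from Proposition \ref{theo:Tangentspace&Q_L}(4), identifies its critical points with planes $P$ corresponding to intersections of $\ell_z$ with the other ruling. Part (6) is then elementary: the two tangent planes to $Q_{L(z)}$ at two distinct points of $\ell_z\cap Q_{L(z)}$ are distinct hyperplanes in $\BP(T_{L(z)}\BL)\cong\BR\BP^3$, and so meet in a line $\ell_z^*$.

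The main obstacle is the transversality bookkeeping in (1) and (3): since $\BK_{L(z)}$, the tangential quadric inside it, the two generator curves and $W_\infty$ all vary with $z\in\BL$, one needs to assemble them into a bundle over $\BL$ and apply Thom transversality to immersions $Z\to\BL$, or else cite the jet-space framework of \cite{wall} already invoked in the introduction. Once the genericity is set up, all the dimension counts are routine. The Jacobian identification in (5) is the other point requiring care, but it reduces to matching the explicit spanning sets from Proposition \ref{theo:Tangentspace&Q_L} with the derivative of the parametrisation of $\BR^3_Z$, and is essentially a coordinate exercise.
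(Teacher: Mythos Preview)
Your proposal is correct and follows essentially the same route as the paper: (2), (4), (6) are treated as immediate from Proposition~\ref{theo:Tangentspace&Q_L} and Proposition~\ref{prop:middlepointsComplex}, while (1) and (3) are reduced to codimension counts plus a transversality argument that the paper in fact postpones to Proposition~\ref{prop:transv}. The only stylistic difference is in (5): the paper gives the one-line geometric argument that $\pi$ is singular at $(x,L(z))$ iff the $2$-sphere of lines through $x$ is tangent to $Z$ at $L(z)$ (i.e.\ $W_x\cap T_zZ\neq 0$), whereas you unwind this as an explicit Jacobian computation---but these are the same statement, and your identification of the transversality bookkeeping as the real content is exactly right.
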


\begin{proof}
{\rm
\noindent (2), (4), (6) are clear, and we prove (1) and (3) below. Part (4) follows from the previous proposition. For (5) the first (resp. second) projection is singular at $(x,L)$ (resp. $(P,L)$) if and only if the set of lines through $x$ (contained in $L$) is tangent to $Z$.
}
\end{proof}

\begin{defn}\label{def:hyp_par_el}
{\rm
\noindent (1) A point $z\in Z$ is said to be respectively {\it hyperbolic, elliptic, parabolic}, if $\ell_z$ meets the quadric $Q_{L(z)}$ in $2, 0$ or $1$ points. At a non-elliptic point the directions on $Z$ determined by the intersections are called {\it torsal directions}. 

\noindent (2) If $\ell_z$ meets $Q_{L(z)}$ at a point corresponding to a pair $(x,P)\in \BR^3\times \BP$ with $x\in L\subset P$ then $x$ is a {\it focal point} of $Z$ at $z$ and $P$ a {\it focal plane}.

\noindent (3) The locus of midpoints of focal points (from above defined even at elliptic points) is called the {\it middle surface} of $Z$.

\noindent (4) Given a hyperbolic (resp. parabolic) point $L(z)$ there are $2$ (resp. $1$) point(s) of intersection with $Q_{L(z)}$ each determining a line in $T_zZ$, the  torsal direction for $Z$ at $L(z)$. (These determine the {\it torsal BDE} of the congruence, defined on the region of non-elliptic points of $Z$.) 

\noindent (5) The map $U\to \BS(\BR^3), (u,v)\mapsto a(u,v)$ is called the {\it direction map} of the congruence; a singular point of the map is called a {\it stall point}.
}
\end{defn}

Suppose $Z$ is parametrised locally as $(\bar a,\bar b)$. Away from stall points we may suppose $a_1=u, a_2=v$, since the direction map is then a local diffeomorphism. On the other hand we shall show below that any generic congruence is locally parametrised as $(a_1,a_2,u,v)$. These parametrisations simplify computations; we generally use the first. 

\begin{prop}\label{prop:flatdirections}
{\rm
\noindent (1) The directions in $T_zZ$ determined by the intersection with the quadric, equivalently the torsal directions, are given by the binary differential equation (BDE)
\begin{equation}\label{eq:BDEfocaldirec}
(a_{1u}b_{2u}-a_{2u}b_{1u})du^2+(a_{1u}b_{2v}+a_{1v}b_{2u}-a_{2u}b_{1v}-a_{2v}b_{1u})dudv+(a_{1v}b_{2v}-a_{2v}b_{1v})dv^2=0.
\end{equation}
In the case of the parametrisations $(u,v,b_1,b_2)$ (resp. $(a_1,a_2,u,v)$), this becomes respectively
\[
b_{2u}du^2+(b_{2v}-b_{1u})dudv-b_{1v}dv^2=0,\quad  a_{2u}du^2+(a_{2v}-a_{1u})dudv-a_{1v}dv^2=0.
\]
\noindent (2) Any curve on $Z$ through $z$ in these directions determines a ruled surface which has a generator at $z$ whose parameter of distribution/pitch vanishes. That is the integral curves of this BDE are the developable surfaces  (or to use a more antiquated term torsal surfaces) on $Z$. 

\noindent (3) The two focal points $b+ta$ and focal planes as in Proposition \ref{theo:Tangentspace&Q_L}(4) are given respectively by the vanishing of the determinants:
\[
\begin{vmatrix} 1&0&-t&0\\0&1&0&-t\\a_{1u}&a_{2u}&b_{1u}&b_{2u}\\a_{1v}&a_{2v}&b_{1v}&b_{2v}\end{vmatrix},\quad  \begin{vmatrix} -c_2&c_1&0&0\\0&0&-c_2&c_1\\a_{1u}&a_{2u}&b_{1u}&b_{2u}\\a_{1v}&a_{2v}&b_{1v}&b_{2v}\end{vmatrix}.
\]
With the simplified parametrisations above these become respectively:
\[
t^2+t(b_{1u}+b_{2v})+(b_{1u}b_{2v}-b_{1v}b_{2u})=0;\ c_1^2b_{1v}+c_1c_2(b_{2v}-b_{1u})-c_2^2b_{2u}=0
\]
\[
t^2(a_{1u}a_{2v}-a_{1v}a_{2u})+t(a_{1u}+a_{2v})+1=0;\ c_1^2 a_{1v}+c_1c_2(a_{2v}-a_{1u})-c_2^2a_{2u}=0. 
\]

\noindent (4) One of the focal points has gone to infinity if and only if we are at a stall point; both have if in addition we are at a parabolic point.

\noindent (5) The middle point surface is parametrised as $(b_1,b_2,0)-\frac{1}{2}(b_{1u}+b_{2v})(u,v,1)$, 
(resp. $(u,v,0)-\frac{1}{2}\frac{a_{1u}+a_{2v}}{a_{1u}a_{2v}-a_{1v}a_{2u}}(a_1,a_2,1)$).
}
\end{prop}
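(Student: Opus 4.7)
The plan is to convert every claim into a computation in the chart used in Theorem \ref{theo:singI(L)} and Proposition \ref{theo:Tangentspace&Q_L}, where a point of $\BL$ is $(a_1,a_2,b_1,b_2)$ with $a=(a_1,a_2,1)$, $b=(b_1,b_2,0)$, and the tangent cone to $I(L)$ at $L$ is $\alpha_1\beta_2-\alpha_2\beta_1=0$. A general tangent vector to $Z$ at $z$ is the equivalence class of $du\,(a_u,b_u)+dv\,(a_v,b_v)$, so its coordinates on the four axes are $(\alpha_1,\alpha_2,\beta_1,\beta_2)=(a_{1u}du+a_{1v}dv,\;a_{2u}du+a_{2v}dv,\;b_{1u}du+b_{1v}dv,\;b_{2u}du+b_{2v}dv)$. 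Substituting into $\alpha_1\beta_2-\alpha_2\beta_1=0$ and collecting by $du^2,dudv,dv^2$ yields the BDE (\ref{eq:BDEfocaldirec}) in part (1); the two specialised forms drop out by setting $(a_1,a_2)=(u,v)$ or $(b_1,b_2)=(u,v)$ respectively. Part (2) is then immediate from Remark 3.4(4): a tangent direction lies in $Q_{L(z)}$ exactly when the corresponding curve of lines in $Z$ is developable.

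For part (3) the key observation is this: the line $\ell_z=\BP(T_zZ)$ meets $Q_{L(z)}$ at the point corresponding to the focal point $x=b+ta$ (respectively the focal plane $c(x)=d$) iff the generator $W_x$ (respectively $V_P$) of Proposition \ref{theo:Tangentspace&Q_L} is coplanar with $T_zZ$ inside $T_L\BL$, i.e.\ iff the four vectors that span $W_x$ (or $V_P$) together with $(a_u,b_u),(a_v,b_v)$ fail to span $T_L\BL$. Using the generator formulas $W_x=\sp\{(1,0,-t,0),(0,1,0,-t)\}$ and $V_P=\sp\{(-c_2,c_1,0,0),(0,0,-c_2,c_1)\}$ this gives exactly the two determinantal conditions of the statement. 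The quadratics in $t$ and the binary forms in $(c_1,c_2)$ written below them are just the cofactor expansions in the two simplified parametrisations.

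Part (4) is read off the quadratic $At^2+Bt+C=0$ obtained in part (3) using the parametrisation $(a_1,a_2,u,v)$, namely $A=a_{1u}a_{2v}-a_{1v}a_{2u}$, $B=a_{1u}+a_{2v}$, $C=1$. One root is at infinity iff $A=0$, and $A$ is precisely the Jacobian of the direction map $a:(u,v)\mapsto(a_1,a_2)$, so this is the stall condition. Both roots are at infinity iff $A=B=0$; a short calculation (writing $(a_{1v},a_{2v})=\lambda(a_{1u},a_{2u})$ when $A=0$ and factoring the BDE) shows that $A=B=0$ is exactly the parabolic condition at a stall point.

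Part (5) follows by Vieta's formulas applied to the quadratics in $t$: the sum of roots is $-(b_{1u}+b_{2v})$ and $-(a_{1u}+a_{2v})/(a_{1u}a_{2v}-a_{1v}a_{2u})$ respectively, so the midpoint of the two focal points is $b+\tfrac12(t_1+t_2)a$, giving the stated parametrisations of the middle surface. The only step requiring real care is the coplanarity argument in (3), since one has to check that two transverse $2$-planes in a $4$-space meet in a projective point precisely when their spanning vectors are linearly dependent, and to verify that this point indeed lies on $Q_{L(z)}$; everything else is either direct substitution or an application of the earlier structural results.
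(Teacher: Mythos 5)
Your proposal is correct and follows essentially the same route as the paper: substituting a tangent vector $du\,(a_u,b_u)+dv\,(a_v,b_v)$ into the tangent-cone equation $\a_1\b_2-\a_2\b_1=0$ for (1), expressing incidence with the generators $W_x$, $V_P$ (and $W_\infty$ for the stall condition) as vanishing of the $4\times 4$ determinants for (3)--(4), and finishing (5) by Vieta and (2) by the developability remark. The extra details you supply (the linear-algebra criterion for two $2$-planes in $T_L\BL$ to meet, and the check that $A=B=0$ is the stall-plus-parabolic condition) are exactly the steps the paper leaves as ``straightforward,'' and they are carried out correctly.
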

 
\begin{proof}
{\rm
\noindent (1), (3). An element of the tangent space is given by $u'\partial(\bar a,\bar b)/\partial u+v'\partial(\bar a,\bar b)/\partial v$ determining a point in $\BP(T_L\BL)$ which lies on the quadric when the equality above holds. This point lies on the generator $W_x$ corresponding to $x=b+ta$ if and only if these partial derivatives, $(1,0,-t,0), (0,1,0,-t)$ are dependent. Similarly for the $V_P$ generators. For $W_{\infty}$ the relevant $4\times 4$ determinant is the condition that $(a_1,a_2)$ is singular. The rest is straightforward.

}
\end{proof} 
\begin{rems}
{\rm
\noindent (1) The definitions in Definition \ref{def:hyp_par_el} and geometric configurations are affine invariants of $Z$. Recall that we have an action of ${\it Aff}_3$ on $\BL$, so if $\phi\in {\it Aff}_3$ there is an associated congruence $\phi^*Z$. The claim is that the focal points, planes, middle surface of $Z$ are taken to those of $\phi^*Z$ by $\phi$.

\noindent (2) There are a range of ways of studying congruences; for example by analogy with the classical study of the extrinsic geometry of surfaces (\cite{weatherburn}), or using frames (\cite{shepherd}). The classical literature does not distinguish affine and Euclidean invariants.  

\noindent (3) Note that the focal points and planes are paired. The quadric is parametrised as the image of a map $\BR\BP^1\times \BR\BP^1\to \BR\BP^3$ as above in Proposition \ref{theo:Tangentspace&Q_L}(5) and a point determined by the intersection of $\ell_z$ with the quadric determines a point and a plane.

}
\end{rems}

We shall uncover a range of different phenomena, and to relate them one can write down the conditions they impose on the Taylor series coefficients of the $a_i, b_i$. The key to genericity questions are transversality results; that presented here is the analogue of one for surfaces in $\BR^3$ given in \cite{reflexions}, and a result concerning incidence sets is established in Section 7. Because we will want to use the first transversality result when considering the Euclidean geometry of congruences and complexes we use the usual Euclidean model for $\BL$ as the tangent bundle to the unit sphere $\BS$ in $3$-space.  The bundle $T\BS$ is covered by $3$ open sets, $U_j$ consisting of those lines not orthogonal to the axes $e_j, 1\le j\le 3$. If ${\it Euc}$ is the Euclidean group there is a smooth map $\phi_j:U_j\to {\it Euc}$ which associates to a line $L=(a,b)$ the translation taking $x$ to $0$ and then the rotation in the line orthogonal to the trace of $L$ and the $e_j$-axis which takes that image to the latter axis. So for each $L\in U_j$ we have a smooth map $\phi_j(L)^*:U_j\to T\BS, L\mapsto \phi_j(L)(L')$ taking $L$ to $[e_j,0]$; indeed the group $Euc$ clearly acts smoothly on $T\BS$, and $\phi_j^*$ is a diffeomorphism onto its image. 

For any manifolds $X,Y$ we write $J_0^k(X,Y)$ for the set of $k$-jets of mappings $X\to Y$ with no constant terms.  Given an immersion $i: Z\hookrightarrow T\BS$ if we set $Z_j=Z\cap U_j$ then we obtain a map $\a_j:Z_j\to J_0^k(Z_j,\BR^4)$ given by $L\mapsto j^k(\bar a,\bar b)$ where we consider the congruence $\phi_j(L)^*(Z)$ and its expression at $\phi_j(L)*(L)=[e_j,0]$ as $(\bar a,\bar b)$ in the usual way, replacing $e_3$ by $e_1, e_2$ for $Z_1, Z_2$ of course. The idea is to identify subsets of the jet space $J_0^k(Z_j,\BR^4)$ corresponding to geometric phenomena and ask that the maps above are transverse to them. If the subsets are to be of interest they need to be invariant under arbitrary changes of co-ordinates in the source and the action of the Euclidean group, or the affine group for this paper on the target. More precisely if we consider all Euclidean (or affine) transformations $G_j$ that fix the line $[e_j,0]$, the subsets of $J_0^k(2,4)$ should be invariant under the corresponding action of $G_j$, and the action of the group of right equivalences ${\cal R}$. In the Euclidean case the subgroup preserving the line is the semi-direct product of rotations about and translations along the line. In the affine case we have the semi-direct product of $GL_2\ltimes \BR$. We call such subsets of $J_0^k(Z_j,\BR^4)$ {\it Euclidean (affine) invariant}; note that for the Euclidean or affine invariant submanifolds transversality is independent of the choice of chart $U_j$ above. Once we have fixed a chart on $Z$ they are subssets of $J^k_0(2,4)$, that is the set of $4$ polynomials of degree $\le k$ in $2$ variables with no constants.

\begin{prop}\label{prop:transv}
{\rm 
\noindent (1) Given a non-elliptic point $L_0=[e_3,0]$ in a congruence $(u,v,b_1,b_2)$ by a rotation about $L_0$ and a translation along it we may suppose that $b_{1u}=b_{2u}=0$ at $u=v=0$. 

\noindent (2) Given a hyperbolic point $L_0=[e_3,0]$ we can further suppose  that either $b_{1u}=b_{2u}=b_{1v}=0, b_{2v}=1$. 

\noindent (3) Let $Z$ be a smooth compact manifold of dimension $1,2$ or $3$. Given a countable set of Euclidean invariant submanifolds $W_j\subset J_0^k(Z,\BR^2), j\in J$, then for a generic immersion $i:Z\hookrightarrow T\BS$, with image contained in a bounded region of $T\BS$, that is for which $b$ is bounded, the map above $Z\to J_0^k(Z,\BR^4)$ is transverse to the $W_j$. 

\noindent (4) For a generic congruence $Z$ at each point $L\in Z$ we can choose a local parametrisation of $Z$ of the form $(a_1(u,v),a_2(u,v),u,v)$.
}
\end{prop}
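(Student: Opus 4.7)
The proposition has four parts of rather different flavour: (1) and (2) are explicit normal-form calculations, (3) is the central transversality statement, and (4) is a short geometric consequence.

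For (1) I would use Proposition \ref{prop:flatdirections}(1) to recognise that torsal directions at $L_0$ are exactly the real eigenvectors of the matrix $B=\bigl(\begin{smallmatrix} b_{1u} & b_{1v}\\ b_{2u} & b_{2v}\end{smallmatrix}\bigr)$ at the origin. The Euclidean stabiliser of $L_0=[e_3,0]$ is two-dimensional: a rotation $R\in SO(2)$ about the $e_3$-axis acts on $B$ by conjugation $B\mapsto RBR^{-1}$ (after reparametrising to keep the form $(u,v,b_1,b_2)$), and translation by $\tau e_3$ acts by $B\mapsto B-\tau I$ (after adjusting the representative so that $b_3=0$). The non-elliptic hypothesis supplies a real eigenvector of $B$; rotating it onto $e_1$ kills $b_{2u}$, and then subtracting the associated eigenvalue as $\tau I$ kills $b_{1u}$. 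For (2), after (1) we have $B=\bigl(\begin{smallmatrix} 0 & \alpha \\ 0 & \beta\end{smallmatrix}\bigr)$, and hyperbolicity forces $\beta\neq 0$; the Euclidean freedom is now exhausted, so I would invoke the larger affine stabiliser of $L_0$ in ${\it Aff}_3$, which additionally contains scalings along $e_3$ (rescaling $B$) and $GL_2$-transformations of the $(e_1,e_2)$-plane acting on $B$ by conjugation. A scaling by $1/\beta$ sets $b_{2v}=1$, and an upper-triangular shear in $GL_2$, which preserves the first column of $B$, can be chosen to cancel $\alpha$ and force $b_{1v}=0$.

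For (3) the strategy is Thom's jet transversality theorem. Because each $W_j$ is Euclidean invariant, it pulls back to a submanifold $\widetilde W_j\subset J^k(Z,T\BS)$ that is invariant under the natural Euclidean action on the target and independent of the choice of chart $\phi_j$; transversality of $\alpha_j$ to $W_j$ is then equivalent to transversality of the ordinary $k$-jet $j^k i$ to $\widetilde W_j$. Thom then gives residuality of $\{i:\,j^k i\transv \widetilde W_j\}$ in the Whitney $C^\infty$-topology on immersions, and a countable intersection of residual sets is residual. The boundedness of $b$ is used only to confine the image to a region where the charts $\phi_j$ are uniformly controlled and the Baire argument runs cleanly. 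I expect this to be the main obstacle: matching the chart-wise invariant submanifolds $W_j$ to a globally defined submanifold of $J^k(Z,T\BS)$ so that Thom's theorem applies cleanly, and handling the quotient by the Euclidean action correctly.

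For (4) I would use Theorem \ref{theo:genrators_Q_Cong}. The form $(a_1,a_2,u,v)$ near $L$ exists exactly when for some plane $P$ transverse to $L$ the map $L'\mapsto L'\cap P$ restricts to a local diffeomorphism of $Z$ near $L$; the differential of this map at $L$ has kernel $T_L Z\cap W_{L\cap P}$, which is non-trivial precisely when $L\cap P$ is one of the (at most two) focal points of $Z$ at $L$. I therefore choose $x\in L$ not a focal point and any plane $P$ through $x$ transverse to $L$; an affine change of coordinates in $\BR^3$ sending $P$ to $\{z=0\}$ and keeping $L$ transverse to that plane (so $a_3\neq 0$) then produces the required parametrisation.
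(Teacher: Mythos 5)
Your parts (1) and (2) are correct and amount to the paper's argument in different clothing: the paper normalises by sending a focal point to the origin and a focal plane to $x_2=0$ (and, for (2), the second pair to $-e_3$ and $x_1=0$ by an affine map), while you diagonalise the matrix $B=\bigl(\begin{smallmatrix} b_{1u} & b_{1v}\\ b_{2u} & b_{2v}\end{smallmatrix}\bigr)$ using the correct actions $B\mapsto RBR^{-1}$, $B\mapsto B-\tau I$, $B\mapsto sB$, $B\mapsto MBM^{-1}$; since torsal directions are exactly the eigendirections of $B$ and focal parameters its (negated) eigenvalues, the two normalisations coincide. The real divergence is in (3). The paper does \emph{not} invoke Thom's jet transversality on $J^k(Z,T\BS)$: it perturbs the congruence by polynomial maps $P:\BR^6\to\BR^6$ near the identity, composes with the submersion $\Phi:(\BR^3\setminus\{0\})\times\BR^3\to T\BS$, shows the resulting map $Z\times V\to J_0^k(Z,\BR^4)$ is a submersion, and applies the basic parametrised transversality lemma plus a countable-union-of-measure-zero argument; the boundedness of $b$ is used there to keep the whole perturbed family inside the region where $\Phi\circ P$ stays an immersion avoiding the zero direction, not merely to "control charts" as you suggest. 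Your route — pulling each Euclidean- (and ${\cal R}$-) invariant $W_j$ back to a submanifold $\widetilde W_j\subset J^k(Z,T\BS)$ of the same codimension and applying ordinary Thom transversality — is exactly the alternative the paper flags in the remark following the proposition; it is legitimate, and buys residuality in the Whitney topology without the polynomial-perturbation apparatus (indeed boundedness becomes unnecessary), but the step you state in one clause is the real work: one must show that the normalisation map from jets of immersions to $J^k_0(2,4)$ is a submersion so that $\widetilde W_j$ is a manifold of the right codimension and the two transversality statements correspond, which uses both the Euclidean invariance and the invariance under right equivalences built into the paper's definition. For (4) your argument is the paper's in a different order: the paper shows directly that "every point of $L$ is focal" is an affine-invariant codimension-$3$ condition in $J^1_0(2,4)$ and kills it by (3), whereas you import "at most two focal points" from Theorem \ref{theo:genrators_Q_Cong}(1) (no $\ell_z$ is a generator for generic $Z$); that is where genericity enters, and that statement itself rests on the same codimension-$3$ count fed through part (3), so you should make that dependence explicit rather than leave "(at most two)" as if it were automatic.
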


\begin{proof}
{\rm
\noindent (1) Since we are at a non-elliptic point we have a focal point, a focal plane and a torsal direction. By a rotation about the line $L_0$ and translation along it we may assume that $0$ is a focal point with $x_2=0$ the corresponding focal plane. From this it follows that at $u=v=0$ in Proposition 3.3 (5) we have $\a_2=\b_2=0$, so the corresponding point of the quadric is $(1,0,0,0)=u'(1,0,b_{1u},b_{2u})+v'(0,1,b_{1v},b_{2v})$. So $v'=0$, our torsal direction is $(1,0)$ and $b_{1u}=b_{2u}=0$. This simplifies matters - for example the point is parabolic if in addition $b_{2v}=0$ there.

\noindent (2) If we are at a hyperbolic point then we have a second focal point and plane, which we may suppose, after an affine transformation, are given by $-e_3$ and $x_1=0$. We deduce that in addition $b_{2v}=1$ and $b_{1v}=0$.

\noindent (3) Let $P^k$ be the set of polynomial maps $\BR^6\to \BR^6$ of degree $\le k$. Suppose that $Z\subset T\BS\subset \BS\times B_r$ where $B_r$ is the ball of radius $r$ centred on the origin. Choose a bounded open subset $U_1$ of $\BR^3$ containing the unit ball $B_1$, and another $U_2$ containing $B_r$. So $Z\subset  \BS\times B_r\subset U_1\times U_2$. Now choose a neighbourhood $V$ of the identity map $Id\in P^k$ with the property that if $P\in V$ then the restriction of $P$ to $B_1\times B_2$ is a diffeomorphism onto its target and if $(\a,\b)\in B_1\times B_2$ has $||\a||>\frac{1}{2}$ then writing the first three components of $P$ as $P_1$ we have $P_1(\a,\b)\ne 0$. There is a natural submersion
\[
\begin{array}{rcl}
\Phi:\BR^3\setminus\{0\}\times \BR^3&\to& T\BS\\ 
(y_1,y_2)&\mapsto& (\frac{1}{||y_1||}y_1,y_2-\frac{y_1\cdot y_2}{y_1\cdot y_1}y_1)=(a,b)
\end{array}
\]
with the oriented line $y_2+ty_1$ corresponding to $(a,b)\in T\BS$. We can also shrink $V$ if needs be to ensure that for $P\in V$ the map $Z\to T\BS, z\mapsto \Phi(P(z))$ is still an immersion for all $\Phi\in V$. Now consider the map 
\[
Z\times V \to J_0^k(Z,\BR^4),
\]
given by applying the map $\a_j$ to the immersed submanifold $\Phi(P(Z))$; strictly speaking this map is only defined on an open subset of $Z$ as usual, but this causes no problems. The claim is that this map is a submersion. We first prove this is the case at points $(z,Id)\in Z\times V$. Consider the map $H:Z\times V\to \BR^3\setminus\{0\}\times \BR^3, (z,P)\mapsto P(z)$ and its jet extension $j_1^kH:Z\times V\to J^k(Z,\BR^3\times \BR^3)$. We claim that this is a submersion. The polynomial mappings $(x,y)\mapsto (x+tp(x,y)e_j,y)$ and $(x,y)\mapsto (x,y+tp(x,y)e_j)$ where $t\in \BR$, $1\le j\le 3$ and $p$ a polynomial clearly lie in $V$ for $t$ small. Differentiating with respect to $t$ taking charts we obtain $j^k(p(x(u,v),y(u,v))e_i,0), j^k(0,p(x(u,v),y(u,v))e_i)$ and since $(x,y):Z\to \BR^3\times \BR^3$ is an immersion the result follows. On the other hand $\Phi$ is a submersion, so the jet-extension 
\[
j_1^k(\Phi\circ H):Z\times V\to J^k(Z,T\BS)
\] 
is also a submersion. Now consider the general case and a point $(z,P)\in Z\times V$. We know that $\Phi(P(Z))$ is a compact immersed submanifold of $T\BS$, just as $Z$ was. Now $P$ has invertible linear part at $(0,0)\in \BR^3\times \BR^3$ so for some polynomial map $Q\in P^k$ we have $Q\circ P=Id$ modulo terms of degree $>k$. Now apply the same argument as above but with $(x+tp(x,y)e_j,y)$ and $(x,y+tp(x,y)e_j)$ first composed with $Q$ and the composite truncated to degree $k$. We now use Thom's basic transversality lemma \cite{GG}, p.53, to deduce that for all polynomials $P\in V$ off a set of measure zero our map applied to the manifold $P(Z)$ is transverse to each $W_i$, and of course a countable union of sets of measure zero has measure zero. 

\noindent (4) Suppose we parametrise our surface locally by the direction of the line and its intersection with the plane $x_3=c$. This replaces the functions $b_1,b_2$ by $b_1+ca_1, b_2+ca_2$. So we only have a problem when $(b_1+ca_1,b_2+ca_2):\BR^2,0\to \BR^2$ is not a local diffeomorphism for all $c$; in other words every point on the line is focal. This means that the first determinant in Proposition \ref{prop:flatdirections}(3) vanishes for all $t$. One easily checks that this determines an affine invariant algebraic subset of $J_0^1(2,4)$ of codimension $3$, so by (3) for a generic congruence this does not happen at any point. 
}
\end{proof}

\begin{rems}
{\rm
\noindent (1) The compactness condition is not essential, but simplifies the proof.

\noindent (2) We often put the congruence locally in a pre-normal form, for example $(u,v,b_1,b_2)$, and then geometric conditions emerge as polynomials in the Taylor series of the $b_i$. The same arguments then apply: provided the subset of $J^k_0(2,2)$ has codimension $\ge 3$, that is can be stratified into countably many such manifolds, then generically it can be avoided.

\noindent (3) As an alternative approach we can show that submanifolds of $J^k_0(2,4)$ which are Euclidean or affine invariant give rise to submanifolds of $J^k(Z,\BL)$ of the same codimension and apply the usual Thom tranversality lemma.  
}
\end{rems}


\section{Focal sets}\label{sec:Focalsets}

The notion of point-focal set is classical but plane-focal sets seem less well studied; they are linked to the incidence sets mentions above.

\begin{defns}
{\rm
\noindent (1) Given a submanifold $Z\subset \BL$ we can consider the {\it incidence sets} $\BR^3_Z=\{(x,L)\in \BR^3\times Z:x\in L\}, \BP_Z=\{(P,L)\in \BP\times Z: L\subset P\}$. 

\noindent (2) The projection $\pi:\BR^3_Z\to \BR^3$ (resp. $\pi:\BP_Z\to\BP$) is called the point (resp. plane) exponential map. 
  
}
\end{defns}
 
\begin{theo}\label{theo:exp_maps}
{\rm
\noindent (1) For generic $Z$ the incident sets $\BR^3_Z, \BP_Z$ are smooth of dimension $\dim Z+1$ and their projections respectively to $\BR^3, \BP$ exhibit the local singularities of stable mappings. In particular for congruences $\dim Z=2$ they are locally diffeomorphisms, folds, cusps or swallowtails; see \cite{GG}. 

\noindent (2) When $\dim Z=2$ the set of critical points of the point exponential map (resp. plane exponential map) consists of the focal points (resp. focal planes) of $Z$.

\noindent (3) The kernel of the exponential map $Z\times \BR\to \BR^3$ at $(u,v,t)$ giving a focal point is the vector $(u',v',0)$ where $(u',v')$ is a torsal direction at that point. The tangent space to the focal surface is spanned by the line $L(z)$ and the image of the other torsal direction. 
}
\end{theo}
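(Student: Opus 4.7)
The plan is to reduce parts (2) and (3) to explicit computations in a local chart and to invoke the transversality machinery of Section 5 for the genericity statement in (1). Throughout I work in the chart of Section 5, writing $L(u,v)=(a,b)$ with $a=(u,v,1)$ and $b=(b_1,b_2,0)$. Then $\BR^3_Z$ is the image of the embedding $(u,v,t)\mapsto (b(u,v)+ta(u,v),L(u,v))$, which is smooth of dimension $\dim Z+1$, and the point exponential map $\pi$ becomes $\varphi(u,v,t)=b(u,v)+ta(u,v)$. For $\BP_Z$, parametrise the pencil of planes through $L(u,v)$ by a projective parameter $[c_1:c_2]$ and repeat; the associated map to $\BP$ is analogous.

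For (2), compute the Jacobian
\[
J(u,v,t)=\begin{pmatrix} b_{1u}+t & b_{1v} & u\\ b_{2u} & b_{2v}+t & v\\ 0 & 0 & 1 \end{pmatrix}.
\]
Expanding along the last row gives $\det J = t^2+(b_{1u}+b_{2v})t+(b_{1u}b_{2v}-b_{1v}b_{2u})$, which is exactly the focal-point equation of Proposition \ref{prop:flatdirections}(3). Hence the critical points of $\pi$ are precisely the focal points of $Z$; the calculation for the plane exponential recovers the second determinant of Proposition \ref{prop:flatdirections}(3) and identifies its critical set with the focal planes.

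For (3), the third row of $J$ forces $t'=0$ in any kernel vector $(u',v',t')$, so $(u',v')$ lies in the kernel of the upper $2\times 2$ block $M$. Taking $(u',v')=(b_{1v},-(b_{1u}+t))$ and substituting into the torsal BDE $b_{2u}\,du^2+(b_{2v}-b_{1u})\,du\,dv-b_{1v}\,dv^2=0$, a short expansion collapses the left-hand side to $-b_{1v}\det M$, which vanishes at a focal point, so $(u',v')$ is a torsal direction. For the tangent plane to the focal surface, $\mathrm{im}\,J$ contains $a=\partial_t\varphi$, while the image of the second torsal direction $(u'',v'',0)$ is a linear combination of the (proportional) first two columns of $J$, hence a nonzero vector in $\{x_3=0\}$ independent of $a$. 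Since $J$ has rank $2$ at the focal point, these two vectors span the tangent plane, as claimed.

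The substantive content is in (1). After identifying the critical set via (2) one must show that for a generic immersion $Z\hookrightarrow\BL$ the map $\varphi$ exhibits only Morin singularities (types $A_0,A_1,A_2,A_3$ when $\dim Z=2$) together with their transverse multigerms. The plan is to pull back Mather's stratification of $J^k(\BR^3_Z,\BR^3)$ by stable orbits to an affine-invariant stratification of $J^k_0(2,4)$ along the polynomial correspondence $(a,b)\mapsto\varphi$, and then to apply Proposition \ref{prop:transv}(3) together with its multijet analogue. The main obstacle is the codimension bookkeeping: one needs to verify that non-stable strata have codimension $>\dim Z+1$ in the jet space (so that they are avoided by a generic family), and that the $A_k$-strata for $k=1,2,3$ have codimensions $1,2,3$ respectively so as to be transversely cut out on submanifolds of $Z$ of the expected dimensions. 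This is routine but genuinely the only delicate step; once it is in place the local normal forms of Whitney--Morin yield the fold/cusp/swallowtail conclusion.
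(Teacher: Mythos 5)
Your parts (2) and (3) are correct and essentially coincide with the paper's argument: the Jacobian determinant you expand is exactly the focal--point quadratic of Proposition \ref{prop:flatdirections}(3) (the paper phrases (2) slightly more geometrically, as tangency of the fibre of $I(\BR^3,\BL)\to\BR^3$ with $Z$, i.e.\ the generator of $Q_{L(z)}$ meeting $\ell_z$, but the content is the same), and your ``third row forces $t'=0$, then eliminate $t$'' computation is precisely the paper's proof of (3); your additional argument for the tangent plane of the focal surface is fine at non-parabolic focal points.

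The gap is in (1), exactly at the step you set aside as routine codimension bookkeeping. Counting codimensions of the Morin strata and of the non-stable set in $J^k(3,3)$ is indeed standard; the real issue is that point-exponential maps of congruences form a very restricted class of maps of three variables -- they are affine in $t$ -- so the correspondence $(j^k(a,b),t)\mapsto j^k_{(0,0,t)}(b+ta)$ has image inside a proper affine subspace of $J^k(3,3)$ and is nowhere near a submersion. Consequently you cannot simply ``pull back Mather's stratification'' to $J^k_0(2,4)$: the preimages of the stable orbits and of the bad set need not be submanifolds of the expected codimension unless you first show that this restricted family of jets meets those strata transversally, and that is the substantive content of part (1), not an afterthought. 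Moreover, to obtain the fold/cusp/swallowtail normal forms you need transversality of the jet extension $j^k\varphi$ in $J^k(Z\times\BR,\BR^3)$ (with the extra variable $t$, which your reduction to Proposition \ref{prop:transv}(3) never accounts for) to the ${\cal A}$-orbits; transversality of $z\mapsto j^k(\bar a,\bar b)$ to invariant subsets of $J^k_0(2,4)$ only controls dimensions of loci on $Z$. The paper avoids this circle by a direct perturbation argument: it replaces $(a,b)$ by $(a+A,b+B)$ with $A,B$ arbitrary polynomial jets, checks that the resulting jet map $(u,A,B)\mapsto j^k(b+B+t(a+A))$ is transverse to every ${\cal A}$-invariant submanifold, and then applies Thom's parametric transversality lemma to the exponential map itself, the plane case following with $c_2$ in place of $t$. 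To repair your proof you would either have to reproduce that argument (show that the available infinitesimal perturbations $\beta+t\alpha$, together with the tangent space to the ${\cal A}$-orbit, span the jet space along line-family jets), or argue via ${\cal K}$-versal contact with the model ``lines through a point'' using Proposition \ref{prop:sectransv}, as the paper does in Theorem \ref{theo:linesthroughapoint}(4); Proposition \ref{prop:transv}(3) alone does not deliver the conclusion. (A minor point: the paper states no multijet analogue of Proposition \ref{prop:transv}, and the remark following Theorem \ref{theo:exp_maps} warns that multilocal refinements need care since the $t$-line is non-compact; the theorem claims only local singularities, so your mention of transverse multigerms is beyond what is required.)
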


\begin{proof}
{\rm
\noindent (1) The sets $\BR^3_{\BL}$ and $\BP_{\BL}$ are clearly smooth with the natural projections to $\BL$ submersions. The incidence sets are the inverse images of $Z$ and hence smooth.  To understand the nature of the singularities it is easiest to write our lines in direction/directrix form, that is as $b(u,v)+ta(u,v), (u,v)\in U\subset \BR^k, 1\le k\le 3, b:U\to \BR^3, a:U\to \BR^3\setminus\{0\}$. The point-exponential map is given by
\[
E:U\times \BR\to \BR^3, (u,v,t)\mapsto b(u,v)+ta(u,v).
\]
Given maps $b:U\to \BR^3, a:U\to \BR^3\setminus\{0\}$ consider the map 
\[
\a:U\times J^k(U,\BR^3)\times J^k(U,\BR^3\setminus\{0\}) \to J^k(U,\BR^3),
\]
\[
(u,A,B)\mapsto j^k(b(u,v)+B(u,v)+t(a(u,v)+A(u,v)).
\] 
We can assume $a+A\ne 0$ for $A$ small. It is easy to see that the map $\a$ is transverse to any ${\cal A}$-invariant submanifold of $J^k(U,\BR^3)$, and by Thom's transversality lemma, for a dense set of maps $A, B$ the set of lines given by $(b+B,a+A)$ yields an exponential map which is transverse to a finite number of such manifolds. Given our dimension ranges the result follows. 

In the second case recall that a plane $[c,d]$ contains a line $[a(u,v),b(u,v)]$ if and only if $c\cdot a(u,v)=0, b(u,v)\cdot c=d$. Working locally we can change co-ordinates so that the plane and line are respectively $[e_1,0],[e_3,0]$ with $c$ specified as $e_1^*+c_2e_2^*+c_3e_3^*$. Moreover we can ensure $b(u,v)=(b_1(u,v),b_2(u,v),0), a(u,v)=(a_1(u,v),a_2(u,v),1)$ for $u,v$ near $p$. The equations for inclusion then become 
\[
a_1(u,v)+c_2a_2(u,v)+c_3=b_1(u,v)+c_2b_2(u,v)-d=0.
\]
So the projection map is given by
\[
u\mapsto [(1,c_2,-a_1(u,v)-c_2a_2(u,v)),b_1(u)+c_2b_2(u,v))].
\]
Given that $c_2$ plays the same role here as $t$ did above, the result follows by the same arguments.

\noindent (2) Consider the case of focal points; the projection $\pi:\BR^3_Z\to \BR^3$ will have singularity at $(x,L(z))$ if and only if $x\in L(z)$ and the set of lines through $x$ is tangent to $Z$. This means that in $\BP(T_{L(z)}\BL)$ the generator corresponding to $x$ meets $\ell_z$, and the result follows. The focal plane case is similar.   

\noindent (3) The vector $(\a,\b,0)$ is in the kernel of the exponential map at $(u,v,t)$ if and only if $(b_{1u}+t)\a+b_{1v}\b=b_{2u}\a+(b_2v+t)\b=0$. Now eliminate $t$.
}
\end{proof}

\begin{rems}
{\rm
\noindent It is possible to improve (1) in Theorem \ref{theo:exp_maps} above to deal with multilocal singularities too, but one needs to be careful in the first case since the domain is non-compact. 
}
\end{rems} 


\section{Second transversality result}\label{sec:transversality}

To study the focal sets we need a slightly different transversality result to that usually invoked in generic geometry which involves the notion of {\it contact-equivalence} which we now recall. Let ${\cal E}_s$ be the ring of germs of smooth functions $\BR^s,0 \to \BR$ and ${\m}_s$ its maximal ideal, the subset that vanish at the origin. Denote by ${\cal E}(s,t)$ the $t$-tuples of elements in ${\cal E}_s$.  The set of germs of diffeomorphisms $\BR^s\times \BR^t,0\to \BR^s\times \BR^t,0$ which can be written in the form $H(x,y)=(h(x),H_1(x,y)),$ with $h$ a diffeomorphism $\BR^s,0\to \BR^s,0$ and $H_1(x,0)=0$ for $x$ near $0$ is the contact group ${\cal K}$; see \cite{mather3}. The group $\cal K$ acts on ${\mathcal M}_s.{\cal E}(s,t)$ as follows: $g=H.f$ if and only if $(x,g(x))=H(h^{-1}(x),f(h^{-1}(x)))$.

Now suppose $X, Y$ are smooth manifolds and $\Gamma$ is a smooth submanifold of $X\times Y$ with the natural projections $\pi_X:\Gamma\to X, \pi_Y:\Gamma\to Y$ submersions.  Clearly for each $(x,y)\in \Gamma$ the latter is given locally as the inverse image of the regular value $0$ of some smooth map $F:X\times Y, (x,y)\to \BR^k,0$; we denote the restriction of $F$ to $:X\times\{y\}$ by $F_y:X\to \BR$. For $x\in X$ (resp. $y\in Y$) we denote by $\Gamma_x$ (resp. $\Gamma_y$) the submanifold $\pi_X^{-1}(x)$ (resp, $\pi_Y^{-1}(y)$). Given a submanifold $Z\subset X$ we can consider the projection $\pi_Y:\pi_X^{-1}(Z) \to Y$. The following is a  generalisation of a result in \cite{bruce}, which dealt with the case $k=1$.

\begin{prop}\label{prop:sectransv}
{\rm
\noindent (1) The contact between  $Z$ and $\Gamma_y$ at $x$ is determined by the ${\cal K}$-type of the restriction of the map $F_y$ to $Z\times \{y\}$. This ${\cal K}$-type is independent of the choice of $F$. The same result holds true for multi-germs at $(x_i,y)\in \Gamma, 1\le i\le r$.

\noindent (2) Let $\cal S$ be a smooth contact invariant stratification of the mult-jet space $_rJ^k(Z,\BR^k)$. For a residual set of embeddings $Z\hookrightarrow X$ the multi-jet extension
\[
_rj^k_1F:Z^{(r)}\times Y\to _rJ^k(Z,\BR^k)
\]
is multi-transverse to $\cal S$. The map $F$ cannot generally be chosen uniformly, but the above definition makes sense independent of the choices involved.
}
\end{prop}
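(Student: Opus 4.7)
The proof of (1) is the standard Mather-style statement about contact equivalence. If $F, F': X\times Y \to \BR^k$ are two local defining submersions for $\Gamma$ near $(x_0,y_0)$, then the components of $F$ and of $F'$ generate the same ideal in the local ring at $(x_0,y_0)$, so a Hadamard-type argument yields matrix-valued germs $M, N$ with $F'=M\cdot F$ and $F=N\cdot F'$, forcing $M(x_0,y_0)$ to be invertible. Restricting to $Z\times\{y\}$ gives $F'_y|_Z = M(\cdot,y)|_Z\cdot F_y|_Z$, a $\cal C$-equivalence between the two restrictions, and hence a ${\cal K}$-equivalence (since $\cal C\subset \cal K$). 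That the ${\cal K}$-type of a defining map is precisely what records the contact of $Z$ with $\Gamma_y$ is the standard content of \cite{mather3}. The multi-germ version follows by running the same argument separately at each point $x_i$, the matrices $M,N$ being chosen independently on disjoint neighbourhoods.

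For (2), the plan is parametrised multi-jet transversality in the spirit of Proposition \ref{prop:transv}(3). Fix a reference embedding $i_0:Z\hookrightarrow X$ and build a finite-dimensional family $\{i_s\}_{s\in B}$ of smooth perturbations of $i_0$, obtained by adding polynomial or bump-supported vector fields in local charts on $X$ and patching so that $i_s$ remains an embedding for small $s$. Working locally on $X\times Y$ where $F$ is defined, consider
\[
\Phi:B\times Z^{(r)}\times Y \to {}_rJ^k(Z,\BR^k),\quad (s,(z_1,\dots,z_r),y)\mapsto {}_rj^k_1(F_y\circ i_s).
\]
By (1), the stratification ${\cal S}$ pulled back via any local choice of $F$ is independent of $F$ and of chart on $X$, so the transversality of $\Phi$ to ${\cal S}$ is well-defined. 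If $\Phi$ is a submersion onto ${}_rJ^k(Z,\BR^k)$, Thom's parametrised transversality theorem \cite{GG} will yield a residual set of parameters $s$ for which $\Phi(s,\cdot,\cdot)$ is transverse to ${\cal S}$; taking a countable exhaustion by charts, strata, and values of $r$, and upgrading from the finite-dimensional family to the space of smooth embeddings of $Z$ into $X$ exactly as in the proof of Proposition \ref{prop:transv}(3), one obtains the required residual set of embeddings.

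The main obstacle is the submersion step. Fix $(s_0,z,y)$ with $s_0=0$, and work in local coordinates on $X$ near each $i_0(z_i)$, writing $i_s(u)=i_0(u)+\eta_s(u)$; then
\[
\frac{d}{ds_\ell}\Big|_{s=0} F_y\bigl(i_s(u)\bigr) = DF_y\bigl(i_0(u)\bigr)\cdot \frac{\partial \eta_s}{\partial s_\ell}(u).
\]
Because $\pi_Y|_\Gamma$ is a submersion, $0$ is a regular value of $F_y$ on a neighbourhood of the relevant locus, so $DF_y(i_0(u)):\BR^{\dim X}\to\BR^k$ is surjective at every point in question. Choosing perturbations of the form $\eta_s(u)=\sum_{i,\ell} s_{i\ell}\,\phi_i(u)\,\xi_{i\ell}(u)$, where $\phi_i$ is a bump supported near $z_i$ and the $\xi_{i\ell}$ are smooth maps whose $k$-jets at $z_i$ span $J^k_{z_i}(Z,\BR^{\dim X})$, then allows us to realise any prescribed element of ${}_rJ^k(Z,\BR^k)$ as the image of $\partial\Phi/\partial s$ at $(0,z,y)$, giving the required surjectivity. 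The remaining bookkeeping---patching across local charts on $X$ and verifying that enlarging $B$ eventually meets every stratum---is routine once this surjectivity is in hand.
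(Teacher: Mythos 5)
Your argument is correct and is essentially the paper's own route: the paper's proof simply defers to \cite{bruce}, remarking that the key point is that the maps $F_y$ are germs of submersions, and your proposal reconstructs exactly that argument --- part (1) via the ideal/$\mathcal{C}$-equivalence computation giving independence of the choice of $F$, and part (2) by perturbing the embedding of $Z$ inside $X$ and using surjectivity of $DF_y$ to obtain jet-level surjectivity before invoking Thom's (multi-jet) transversality theorem. The only small point worth making explicit is that $DF_y$ is surjective only near $\Gamma_y=F_y^{-1}(0)$, so your ``every point in question'' should be read as restricting to multi-jets with vanishing values, the remaining jets lying where transversality to a contact-invariant stratification imposes no condition.
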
  

The proof is the same as given in \cite{bruce}; the key is that the mappings $F_y:X\to \BR^k$ are germs of submersions. We will apply this result in the cases when: $X=\BR^3$ (resp. $\BP$), $Y=\BL$ and $\Gamma=\{(x,L):x\in L\}$ (resp. $\{(P,L):L\subset P\}$), and in two further cases we explain below.


\section{Model submanifolds and contact with congruences}\label{sec:ContactModels}

Let $M$ and $N$ be two submanifolds of $\BR^n$ with $p\in M\cap N$. If we parametrise $M$ locally by an immersion $\phi:\BR^s,0\to \BR^n,p$, and $N$ near $p$ is the zero set of a submersion $\psi :\BR^n,p\to \BR^t,0$, the ${\cal K}$-type of the {\it contact map}, that is the composite $\psi\circ \phi: \BR^s,0\to \BR^t,0$, does not depend on any choices made in this construction. The {\it contact between $M$ and $N$ at $p$} is the $\cal K$-singularity type of this germ; see \cite{Montaldi}.

In studying surfaces in $3$-space we tease out their geometry by \lq comparing them\rq\ with model submanifolds. Our model of \lq roundness\rq\ might be a sphere, of \lq flatness\rq\ a line or plane. We measure roundness or flatness of a surface by considering its contact with these models.  In our context a good model submanifold is a ruled surface, congruence or complex ($3$-dimensional submanifold of $\BL$) whose symmetry group in ${\it Aff}_3$ is large.  In each case we have a submanifold of the space of lines, the analogue of the sphere, and a manifold which parameterises them, for spheres the set of centres and radii.

\begin{exs}{\rm \bf(Model submanifolds)}
	
{\rm
\noindent (1) {\it Lines through a point.} There is a $3$-parameter family of lines parametrised by points of $\BR^3$; the set of oriented lines through a point is diffeomorphic to $\BS$. 

\noindent (2) {\it Lines in a plane.} A $3$-parameter family of planes; the set of oriented lines in a plane is diffeomorphic to a cylinder, the tangent bundle to the circle.  

\noindent (3) {\it Lines in a given direction.} A family parametrised by $\BR\BP^2$, and for each point of $\BR\BP^2$ we have a $2$-parameter family of lines. 

\noindent (4) {\it Lines in parallel planes.} A $2$-parameter family of directions, each giving a set of oriented lines orthogonal to that direction, which is diffeomorphic to $\BS^1\times \BR^2$. With a Euclidean metric these are lines orthogonal to fixed direction. 

\noindent (5) {\it Lines in a plane through a point.} A family parametrised by a $5$-dimensional space $I(\BR^3,\BP)$, with each point giving a $1$ parameter family of lines. 

\noindent (6) {\it Lines meeting a line.} As above this is a hypersurface in $\BL$ with an $A_1$ singularity. This is clearly a $4$-parameter family, parametrised by points of $\BL$. Away from cone points we can consider contact in the usual way. At the cone point we do not have a smooth manifold but given a line $L\in \BL$ locally we can choose a mapping $f:\BL,L\to \BR$ with an $A_1$ singularity and $f^{-1}(0)$ the lines through $L$, and any two such mappings with $A_1$ singularities are ${\cal K}$-equivalent. So given a germ of a manifold $Z$ in $\BL$ through $L$ we can consider $f|Z:Z,L\to \BR$ which is well defined up to ${\cal K}$-equivalence. 
}
\end{exs}

\subsection{Lines through a point}

\begin{theo}\label{theo:linesthroughapoint}
{\rm
\noindent (1) The set of lines through a point $p$ is tangent to the congruence $Z$ at $L$ if and only if $p$ is a focal point of the congruence at $L$.

\noindent (2) For a generic congruence the contact at $L$ is of type $A_k$, in other words is ${\cal K}$-equivalent to $(u,v^{k+1})$, for $0\le k\le 3$. These singularities are ${\cal K}$-versally unfolded within the family of lines through a point. 

\noindent (3) Taking $p=0$ and working at $[e_3,0]$, parametrising $Z$ as $(u,v,b_1,b_2)$ the condition for singular contact is $b_{1u}b_{2v}-b_{1v}b_{2u}=0$. Supposing $b_{1u}=b_{2u}=0$ at $(0,0)$ a non-elliptic point, the condition for $A_{\ge 2}$ contact, corresponding to the cuspidal edge on the focal set, is $b_{1v}b_{2uu}-b_{2v}b_{1uu}=0$.  At a hyperbolic point we may suppose further that $b_{1v}=0$ so this become $b_{1uu}=0$ in this case. 

\noindent (4) For a generic congruence $Z$ the point-exponential map is stable so has folds, cuspidal edges, swallowtail points and the usual transverse self intersections. The folds (resp. cusps, swallowtails) correspond to contact of type $A_1$ (resp. $A_2, A_3$) between $Z$ and the set of lines through the focal point. 
}
\end{theo}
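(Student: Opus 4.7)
The plan is to handle the four parts in order, leveraging the quadric description of Section \ref{sec:structuresTgbundle} for (1), a direct chart computation for (3), the second transversality result (Proposition \ref{prop:sectransv}) together with Mather's $\mathcal{K}$-classification of map germs $(\mathbb{R}^2,0)\to(\mathbb{R}^2,0)$ for (2), and the standard dictionary between $\mathcal{K}$-contact and singularities of projections for (4). For (1) I note that the set of lines through $p$ is a smooth $2$-dimensional submanifold of $\mathbb{L}$ whose tangent plane at $L\ni p$ is precisely the $2$-plane $W_p\subset T_L\mathbb{L}$ of Proposition \ref{theo:Tangentspace&Q_L}(1). Two $2$-surfaces in the $4$-manifold $\mathbb{L}$ are tangent at a common point iff their tangent planes have non-trivial intersection, and projecting to $\mathbb{P}(T_L\mathbb{L})$ this is equivalent to $\ell_z$ meeting $\mathbb{P}(W_p)$. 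Since $\mathbb{P}(W_p)$ is a generator of $Q_L$ by Proposition \ref{theo:Tangentspace&Q_L}(3), the intersection lies on $Q_L$, and by Definition \ref{def:hyp_par_el}(2) this is precisely the condition that $p$ be a focal point of $Z$ at $L$.

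Part (3) is a computation in the chart. Taking $L_0=[e_3,0]$ and $p=0$ in the parametrisation $(u,v,b_1,b_2)$, the set of lines through $0$ is cut out by the submersion $(b_1,b_2)=(0,0)$, so the contact map of $Z=(u,v,b_1(u,v),b_2(u,v))$ with this model family is simply $(b_1,b_2)\colon(\mathbb{R}^2,0)\to(\mathbb{R}^2,0)$. Non-immersivity (that is $A_{\ge 1}$) is the Jacobian condition $b_{1u}b_{2v}-b_{1v}b_{2u}=0$, which by Proposition \ref{prop:flatdirections}(3) is precisely the focal condition at $t=0$, giving a second proof of (1). After the normalisation $b_{1u}=b_{2u}=0$ of Proposition \ref{prop:transv}(1), the Jacobian has kernel $\partial_u$ and image $\mathrm{span}\{(b_{1v},b_{2v})\}$; the intrinsic second derivative along the kernel is $(b_{1uu},b_{2uu})$, and $A_{\ge 2}$ contact amounts to its lying in the image, giving $b_{1v}b_{2uu}-b_{2v}b_{1uu}=0$. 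Imposing the stronger hyperbolic normalisation $b_{1v}=0,\,b_{2v}=1$ of Proposition \ref{prop:transv}(2) reduces this to $b_{1uu}=0$.

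For (2), I would invoke Proposition \ref{prop:sectransv} with $X=\mathbb{L}$, $Y=\mathbb{R}^3$ and $\Gamma=I(\mathbb{R}^3,\mathbb{L})$, stratifying $J^k(\mathbb{R}^2,\mathbb{R}^2)$ by $\mathcal{K}$-orbits. The simple corank-$1$ types are $A_k\colon(u,v^{k+1})$, and since the contact locus $\{(L,p):L\in Z,\,p\in L\}$ is only $3$-dimensional, transversality forces just the $A_k$ with $k\le 3$ to occur generically, with $A_3$ confined to isolated points. For the $\mathcal{K}$-versal-unfolding claim, one computes the partial derivatives $\partial F_p/\partial p_i$ of the defining submersion for $\Gamma_p$ at $p=0$, which in the chart are $(-1,0), (0,-1), (u,v)$, and verifies that together with the $\mathcal{K}$-tangent space to each $A_k$-germ they span all of $\theta(f)$ modulo the relevant module; I would carry this out in the normal form $(u,v^{k+1})$. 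The main obstacle I anticipate is precisely this versality verification: matching the geometric variations of $p$ to the algebraic $\mathcal{K}$-normal space of $A_k$ for $k=3$, where the dimensions are tight. Finally for (4), Theorem \ref{theo:exp_maps}(1) already guarantees stability of the point exponential map $\pi\colon\mathbb{R}^3_Z\to\mathbb{R}^3$ with only folds, cusps and swallowtails (plus generic transverse self-intersections), and the usual dictionary identifying the $\mathcal{A}$-type of $\pi$ at $(p,L)$ with the $\mathcal{K}$-type of the contact between $Z$ and $\Gamma_p$ translates $A_1,A_2,A_3$ into folds, cuspidal edges and swallowtails respectively.
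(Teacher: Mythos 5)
Your overall route coincides with the paper's: part (1) via the identification of the tangent plane to the model with the point-generator $W_p$ of $Q_L$ and the dimension count in $T_L\BL$; part (3) by the chart computation with contact map $(b_1,b_2)$ and the intrinsic second derivative; part (2) via Proposition \ref{prop:sectransv} with the $\cal K$-orbit stratification and the codimension count excluding $A_{\ge 4}$ (and corank $2$); part (4) via the stability of the point-exponential map and the standard dictionary between the $\cal A$-type of the projection and the $\cal K$-type of the contact. All of this is correct and is essentially what the paper does.

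The one step you have framed incorrectly is the versality claim in (2), which you single out as the main anticipated obstacle. It is not a separate verification, and indeed it cannot be carried out unconditionally: the unfolding by $p$ of an $A_2$ or $A_3$ contact germ is \emph{not} always versal (versality there is equivalent to stability of the point-exponential map at the corresponding point, which fails for special congruences, e.g. when the family has a degenerate cusp or swallowtail), so checking that the speeds $(-1,0)$, $(0,-1)$, $(u,v)$ span the $\cal K$-normal space of the normal form $(u,v^{k+1})$ can only succeed under genericity hypotheses on the jet of $(b_1,b_2)$. The efficient way out is the one the paper uses and that you have already half-invoked: transversality of the jet extension of the family to the $\cal K$-orbit of an $A_k$ germ is equivalent to $\cal K$-versality of that family, so Proposition \ref{prop:sectransv} delivers both the list $A_k$, $0\le k\le 3$, and the versal unfolding statement in one stroke for generic $Z$; only the $A_1$ case (where the two constant speeds already cover the $1$-dimensional normal space) is automatic. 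With that adjustment your argument is complete and agrees with the paper's proof.
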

\begin{proof}
{\rm
\noindent (1), (2)  We have seen that the tangent space to the set of lines through a point $p$ corresponds to a plane determining a generators of the quadric $Q_L$ in the projectivised space. If we have tangency with the congruence than the sum of the tangent space to $Z$ and this plane has dimension $\le 3$, which projectivising means that the line determined by the tangent space to $Z$ and the generator meet, and by definition the corresponding point $x_0$ is a focal point of $Z$. The rest of the result follows from the transversality Proposition \ref{prop:sectransv}. We deduce that at most points on the surface the contact map is of type $(u,v^2)$, but we expect a curve of points where the contact with the surface is ${\cal K}$-equivalent to $(u,v^3)$ and isolated points where it is ${\cal K}$-equivalent to $(u,v^4)$. The corresponding subsets will be respectively smooth, cuspidal edge, swallowtail. We will also get the usual generic self-intersections. 

\noindent Part (3) is clear and (4) follows from (3) but is not hard to prove directly. 
}
\end{proof}

\begin{rems}
{\rm
\noindent (1) The discriminant of a germ of type $A_k$ is the same as that for an $A_k$ function: in $\BR^3$ the classical cuspidal edge, swallowtail etc. 

\noindent (2) In $1$-parameter families of congruences one expects to get isolated incidences where we have contact of type $(u,v^5)$ (referred to as a butterfly), and $(uv,u^2\pm v^2)$. 
}
\end{rems}

\subsection{Lines in a plane}\label{ssec:linesinaplane}

\begin{theo}\label{theo:linesinaplane}
{\rm
\noindent (1) The set of lines in a plane $P$ is tangent to the congruence $Z$ at $L$ if and only if $P$ is a focal plane of the congruence at $L$.

\noindent (2) For a generic congruence the contact at $L$ is of type $A_k$ for $0\le k\le 3$. These singularities are ${\cal K}$-versally unfolded within the family of lines through a point. 

\noindent (3) Taking $P=\{x_2=0\}$, working at $[e_3,0]$, parametrising $Z$ as $(u,v,b_1,b_2)$ the condition for singular $A_1$ contact is $b_{2u}=0$. The condition for $A_{\ge 2}$ contact, corresponding to the cuspidal edge on the plane focal set, is $b_{1u}=b_{2uu}=0$, for $A_{\ge 3}$ contact, corresponding to a swallowtail is $b_{2u}=b_{2uu}=b_{2uuu}=0$ at $(0,0)$. 

\noindent (4) For a generic congruence the dual of the point-focal set of $Z$ is the plane-focal set. More precisely the tangent plane to the point-focal set at a smooth point is the focal plane {\it for the other} focal point. 

\noindent (5) For a generic congruence $Z$ the exponential map is stable so has folds, cusps, swallowtail points and corresponding transverse self-intersections. 
}
\end{theo}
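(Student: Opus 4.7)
The plan is to model the proof on that of Theorem~\ref{theo:linesthroughapoint}, with the role of the point-generators $W_x$ played by the plane-generators $V_P$ of the quadric $Q_L$ from Proposition~\ref{theo:Tangentspace&Q_L}(4), and with the second transversality result Proposition~\ref{prop:sectransv} applied to the incidence correspondence $\Gamma=I(\BP,\BL)$ rather than $I(\BR^3,\BL)$. For (1), the set of lines contained in $P$ is a smooth $2$-dimensional submanifold of $\BL$ whose tangent space at any $L\subset P$ is $V_P$, so tangency with $Z$ at $L$ is the condition that $T_LZ+V_P$ have dimension $\le 3$; projectively this says $\ell_z$ meets the $V_P$-generator of $Q_L$, which by Theorem~\ref{theo:genrators_Q_Cong}(2) is precisely the condition that $P$ be a focal plane. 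For (2) and (3), working in the chart $L=[e_3,0]$, $P=\{x_2=0\}$, $Z:(u,v)\mapsto (u,v,b_1,b_2)$, the submanifold of lines in $P$ is cut out by $a_2=b_2=0$, so the contact map is simply $(u,v)\mapsto (v,b_2(u,v))$; since its first component is a submersion, its $\cal K$-type is determined by the one-variable germ $b_2(u,0)$, which gives the stated $A_k$-conditions by inspection. Proposition~\ref{prop:sectransv} applied to $\Gamma=I(\BP,\BL)$ then shows that for a generic $Z$ only the $\cal K$-codimension $\le 3$ types $A_0,\dots,A_3$ occur, and each is $\cal K$-versally unfolded by the $3$-parameter family $\BP$.

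The crux is the duality statement (4). Let $L\in Z$ be a hyperbolic point, with focal pairs $(x_1,P_1)$, $(x_2,P_2)$ and corresponding torsal directions $d_1,d_2\in T_LZ$. As a line through the origin in $T_L\BL$, each $d_i$ lies on the affine cone $Q_L$, at the intersection of the two generators $W_{x_i}$ and $V_{P_i}$; this is precisely how the pairing between focal points and focal planes is encoded by the quadric. Since $V_{P_i}$ is the tangent space at $L$ to the submanifold of $\BL$ consisting of lines in $P_i$, the condition $d_i\in V_{P_i}$ means that the $1$-parameter family of lines in $Z$ along the integral curve of $d_i$ is first-order tangent to that submanifold; hence the developable surface it sweeps has first-order contact with $P_i$ along all of $L$, and in particular its tangent plane at every point of $L$ equals $P_i$. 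Applying this with $i=2$: by Theorem~\ref{theo:exp_maps}(3) the tangent plane to the point-focal sheet at $x_1$ is spanned by $L$ and $E_*(d_2)$, and by the preceding observation $E_*(d_2)\subset P_2$, so the tangent plane is exactly $P_2$. This is the asserted duality.

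Part (5) then follows from Theorem~\ref{theo:exp_maps}(1) applied to the incidence set $\BP_Z$, combined with the versality established in (2). The main obstacle is (4): one must see that each torsal direction is simultaneously attached to one point-generator and one plane-generator of $Q_L$, and then translate the membership $d_i\in V_{P_i}$ into the geometric statement about the developable surface swept by $d_i$-integral curves being tangent to $P_i$ all along $L$. The remaining items are routine consequences of the structural results already established.
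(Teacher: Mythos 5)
Your proposal is correct, and for parts (1), (2), (3) and (5) it follows essentially the paper's own route: tangency read off from the intersection of $\ell_z$ with the $V_P$-generator of $Q_L$, the contact map $(u,v)\mapsto(v,b_2(u,v))$ reduced to the one-variable germ $b_2(u,0)$ (giving $b_{2u}=0$, $b_{2u}=b_{2uu}=0$, etc., in agreement with the proof in the paper; the "$b_{1u}$" in the statement of (3) is evidently a slip for $b_{2u}$), and genericity/versality from Proposition \ref{prop:sectransv} applied to $I(\BP,\BL)$, with (5) as in Theorem \ref{theo:exp_maps}. Where you genuinely diverge is (4): the paper argues computationally, taking $(c_1,c_2,c_3)$ in the kernel of the derivative of the point-exponential map and eliminating $t$ to recover the quadratic defining the focal planes, whereas you argue synthetically, noting that each torsal direction $d_i$ lies in $V_{P_i}\cap W_{x_i}$, that the developable swept along $d_i$ therefore has tangent plane $P_i$ at every point of $L$ away from $x_i$ (since $dE_t(d_i)$ depends only on $d_i\in T_L\BL$ and can be compared with a curve of lines inside $P_i$ with the same velocity), and then invoking Theorem \ref{theo:exp_maps}(3) to identify the tangent plane of the point-focal sheet at $x_1$ with $\mathrm{sp}\{a,dE_{t_1}(d_2)\}\subset P_2$, hence with $P_2$ since $x_1\in L\subset P_2$. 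Your route has the advantage of making the "for the \emph{other} focal point" pairing transparent (it is exactly the pairing of generators on $Q_L$), while the paper's elimination argument is shorter and works uniformly at non-stall points; the only small caveats in yours are that the swept developable's tangent plane is undefined at the focal point $x_i$ itself (harmless, since you evaluate at $x_1\neq x_2$) and that the argument as stated presumes a hyperbolic point with distinct focal pairs, which is exactly the "smooth point" setting of the claim.
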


\begin{proof}
{\rm 
\noindent (1) This follows from the same argument as in the proof of Theorem \ref{theo:linesthroughapoint}; the tangent space to the set of lines in a plane $P$ at a point $L$ yields one of the generators of the quadric $Q_L$ and tangency between the congruence $Z$ at $L(z)$ and this tangent space means the line $l_z$ determined by the tangent space $T_zZ$ meets this generator, and we know this means $P$ is a focal-plane. 

\noindent Part (2) follows from the transversality result in Proposition \ref{prop:sectransv}. 

\noindent (3) The line determined by $(u,v,b_1,b_2)$ lies in the plane $x_2=0$ if and only if $v=b_2=0$, so the condition for a singularity is $b_{2u}=0$ and for degenerate contact $b_{2u}=b_{2uu}=0$. 

\noindent (4) Using the parametrisation at non-stall points, the plane $c_1x_1+c_2x_2+c_3x_3=d$ contains the tangent plane to the focal set if $(c_1,c_2,c_3)$ lies in the kernel of the derivative of the exponential map. This yields two equations involving $t$ and eliminating $t$ gives the condition on $c_1, c_2$ for a focal plane, with $c_3, d$ determined by $c_1u+c_2b+c_3=0, c_1b_1+c_2b_2=d$.  

\noindent (5) Follows as for the point exponential map. 
}
\end{proof}

\begin{rems}
{\rm
\noindent (1) On the point focal-set there will be cuspidal edges, double point curves and swallowtail points. Dualising these become parabolic curves, points of bitangency and cusps of Gauss on the plane-focal set, and vice-versa. We have seen that the condition for a cuspidal edge on the plane-focal set is $b_{2u}=b_{2uu}=0$ which means that we have a parabolic curve on the point-focal set, the further condition $b_{2uu}=0$ means we have a cusp of Gauss. (All partial derivatives evaluated at $(0,0)$ as usual.) 

\noindent (2)  It would be interesting to understand the way these two sets of objects appear on each focal set. This is done for normal congruences in \cite{wilk}. The following result provides some information in the general case. 
}
\end{rems} 

\begin{prop}\label{prop:FocalSurf_at_Par}
{\rm
For a generic congruence, the focal set is a regular surface along the image of the parabolic curve except at isolated points where the cuspidal edge cuts it (these points are precisely the folded singularities of the BDE (\ref{eq:BDEfocaldirec}) 
of the torsal directions; Proposition \ref{prop:flatdirections}). Every regular point on that curve is a hyperbolic point of the focal surface.
}
\end{prop}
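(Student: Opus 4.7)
The plan is to work in the chart of Proposition~\ref{prop:transv}: at a parabolic point $L_0=[e_3,0]$ I parametrise $Z$ as $(u,v,b_1,b_2)$ and normalise to $b_{1u}=b_{2u}=b_{2v}=0$ at the origin.  The first two come from part~(1) of that proposition, and the third is the condition that the focal quadratic of Proposition~\ref{prop:flatdirections}(3),
\[
t^{2}+t(b_{1u}+b_{2v})+(b_{1u}b_{2v}-b_{1v}b_{2u})=0,
\]
has a double root, necessarily at $t=0$.  Generically $b_{1v}\ne 0$, since otherwise the tangent line to $Z$ at $L_0$ would be a generator of $Q_{L_0}$, a degeneration excluded by Theorem~\ref{theo:genrators_Q_Cong}(1); the unique torsal direction at $L_0$ is then $(1,0)$.

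I then realise the point-focal set as the image of $V=F^{-1}(0)\subset\BR^{3}$ under $E(u,v,t)=(b_{1}+tu,\,b_{2}+tv,\,t)$, where
\[
F(u,v,t)=(b_{1u}+t)(b_{2v}+t)-b_{1v}b_{2u}
\]
is the focal determinant.  A direct calculation gives
\[
\nabla F\big|_{(0,0,0)}=\bigl(-b_{1v}b_{2uu},\,-b_{1v}b_{2uv},\,0\bigr),
\]
the vanishing of $F_{t}$ recording the double root.  So $V$ is a smooth surface near $L_{0}$ provided $(b_{2uu},b_{2uv})\ne(0,0)$, a further condition which combined with parabolicity has codimension $\ge 3$ in the jet space and so is avoided for a generic congruence by Proposition~\ref{prop:transv}(3).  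Computing $dE$ on $T_{0}V=\mathrm{span}\{(0,0,1),(b_{2uv},-b_{2uu},0)\}$ gives respectively $(0,0,1)$ and $(-b_{1v}b_{2uu},0,0)$, so $E|_{V}$ fails to be an immersion at $L_{0}$ iff $b_{2uu}=0$.

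Two comparisons complete the first half of the statement.  By Theorem~\ref{theo:linesthroughapoint}(3), the cuspidal-edge condition $b_{1v}b_{2uu}-b_{2v}b_{1uu}=0$ collapses to $b_{2uu}=0$ at a parabolic point (where $b_{2v}=0$), identifying the singular points of the focal image along the parabolic curve with its intersections with the cuspidal edge.  Next, the discriminant $(b_{1u}-b_{2v})^{2}+4b_{1v}b_{2u}$ of the torsal BDE $b_{2u}\,du^{2}+(b_{2v}-b_{1u})\,du\,dv-b_{1v}\,dv^{2}=0$ coincides with the discriminant of the focal quadratic, so the parabolic curve of $Z$ is precisely the BDE discriminant; its tangent at $L_{0}$ is normal to $(4b_{1v}b_{2uu},\,4b_{1v}b_{2uv})$, and the unique torsal direction $(1,0)$ is tangent to the curve iff $b_{2uu}=0$.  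This synchronises the three coincident loci: cuspidal-edge intersections, folded singularities of the BDE, and failures of $E|_{V}$ to immerse.

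For the hyperbolicity claim, at a regular parabolic-image point ($b_{2uu}\ne 0$) the implicit function theorem solves $F=0$ for $u=U(v,t)$ with $U_{v}(0)=-b_{2uv}/b_{2uu}$, $U_{t}(0)=0$, and the $dE$-calculation above identifies $dE(T_{0}V)$ with the plane $x_{2}=0$.  The height function in this normal direction is $\varphi(v,t)=b_{2}(U(v,t),v)+tv$, and a short Taylor expansion yields
\[
\varphi(v,t)=tv+\frac{b_{2uu}b_{2vv}-b_{2uv}^{2}}{2\,b_{2uu}}\,v^{2}+O(3),
\]
whose Hessian at the origin has determinant $-1$; the second fundamental form is indefinite and the focal surface is hyperbolic at that point.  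The only delicate piece I anticipate is bookkeeping: keeping the three coincident conditions lined up at the level of the $3$-jet of $(b_1,b_2)$ without duplicating the Taylor arithmetic across the three arguments.
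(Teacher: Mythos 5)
Your proposal is correct and follows essentially the same route as the paper: the same chart and normalisation $b_{1u}=b_{2u}=b_{2v}=0$, $b_{1v}\ne 0$, the same focal determinant (your $F$ equals the paper's $g$), an implicit-function-theorem parametrisation of the focal set whose regularity reduces to $b_{2uu}\ne 0$, identification of the exceptional points with folded singularities of the torsal BDE, and the same height-function computation giving an indefinite $A_1$ contact with the tangent plane $x_2=0$, hence hyperbolicity. The only differences are cosmetic: you solve $F=0$ for $u$ rather than $v$ and verify the cuspidal-edge and folded-singularity identifications explicitly where the paper cites Theorem \ref{theo:linesthroughapoint} implicitly and Davydov's book.
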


\begin{proof}
We parametrise the congruence by $(u,v)\mapsto [(u,v, 1), (b_1,b_2,0)]$.  A point $(b_1,b_2,0)+t(u,v, 1)\in \mathbb R^3$ is on the focal set when $g(u,v,t)=0$ with 
\[
g(u,v,t)=
t^2+t(b_{1u}+b_{2v})+(b_{1u}b_{2v}-b_{1v}b_{2u}).
\]
The discriminant of $g=0$ viewed as a quadratic equation in $t$ is 
$(b_{2v}-b_{1u})^2+4b_{1v}b_{2u}$ which vanishes precisely on the parabolic curve.
\\
Suppose that $L(0,0)$ is on the parabolic curve and take $b_{1u}=b_{2u}=b_{2v}=0$ and $b_{1v}\ne 0$ at $(0,0)$ (see Proposition \ref{prop:transv}). Then  the double root of $g(0,0,t)=0$ is at $t=0$. For a generic congruence, $g_u(0,0,0)=-(b_{1v}b_{2uu})(0,0)\ne 0$ or $g_v(0,0,0)=-(b_{1v}b_{2uv})(0,0)\ne 0$; suppose the latter. Then by the implicit function theorem, the surface $g=0$ is locally at the origin the graph of 
 $v=h(u,t)$ for some smooth function $h$. The focal set is then locally parametrised by $\psi(u,t)=(b_1(u,h(u,t)),b_2(u,h(u,t)),0)+t(u,h(u,t), 1)$. We  have $h_t(0,0)=0$ and $h_u(0,0)=-(g_u/g_v)(0,0)=-(b_{2uu}/b_{2uv})(0,0)$. Therefore, $\psi_u(0,0)=((h_ub_{1v})(0,0),0,0)=(-(b_{1v}b_{2uu}/b_{2uv})(0,0),0,0)$ and $\psi_u(0,0)=(0,0,1)$. The two vectors are linearly independent if and only if $b_{2uu}(0,0)\ne 0$. For generic congruences $b_{2uu}(0,0)$ can vanish at isolated points on the parabolic curve. Such points are those where the unique (double) torsal direction is tangent to the parabolic curve, i.e., the folded singularities of the BDE (\ref{eq:BDEfocaldirec}), \cite{davbook}.
\\
With the above setting, the contact between the focal surface and its tangent plane $x_2=0$ is measured by the singularity type of the function 
$H(u,t)=b_2(u,h(u,t))+th(u,t)$  at the origin. We have $H_u(0,0)=H_{t}(0,0)=0$, $H_{tt}(0,0)=0$ and $H_{ut}(0,0)=-h_u(0,0)$. Thus, the singularity of $H$ at the origin is of type $A_1^-$ if the origin is a regular point of the focal surface (i.e., when $h_u(0,0)\ne 0$). It follows that all regular points on the image of the parabolic curve are hyperbolic points of the focal surface.
\end{proof}

\subsection{Lines in a given direction}

\begin{theo}\label{theo:linesinadirection}
{\rm
\noindent (1) The set of lines in a given direction $a$ is tangent to the congruence $Z$ at $L$ if and only if the direction map $m:U\to \BR\BP^2$ is singular.

\noindent (2) For a generic congruence the contact at $L$ is of type $A_k$, for $0\le k\le 2$. These singularities are ${\cal K}$-versally unfolded within the family of lines through a point. 

\noindent (3) For a generic congruence $Z$ the direction map $m:Z\to \BS^2$ is stable, that is has folds, cusps and transverse intersections of folds, with folds and cusps corresponding respectively to $A_1$ and $A_2$ singularities above. 
}
\end{theo}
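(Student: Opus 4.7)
The plan for part (1) is to unpack the identification between $W_\infty$ and the tangent space to the submanifold of lines in a fixed direction. Recall from Proposition \ref{theo:Tangentspace&Q_L}(1) that $W_\infty \subset T_L\BL$ is the tangent space to the $2$-parameter family of lines parallel to $L$, and is the distinguished generator of the quadric $Q_L$. The direction map $m:\BL \to \BS$ (defined locally) is exactly the projection that kills the $W_\infty$ factor, so $\ker dm_L = W_\infty$. Consequently $\ker(dm|_Z)_z = T_zZ \cap W_\infty$, which is nonzero if and only if $Z$ is tangent at $L$ to the fibre of $m$ through $L$, i.e.\ to the surface of lines in direction $a=m(z)$. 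In the chart $(a_1(u,v),a_2(u,v),u,v)$ supplied by Proposition \ref{prop:transv}(4) this amounts to the vanishing of the Jacobian $a_{1u}a_{2v}-a_{1v}a_{2u}$, consistent with Proposition \ref{prop:flatdirections}(4) on stall points.

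For part (2), working in the same chart, the submanifold of lines in direction $(a_1^0,a_2^0,1)$ is locally cut out by the submersion $(a_1,a_2,b_1,b_2)\mapsto (a_1-a_1^0,a_2-a_2^0)$. Hence the contact map of $Z$ with this submanifold at $z$ is the germ $(u,v)\mapsto (a_1(u,v)-a_1^0,a_2(u,v)-a_2^0)$, which is the direction map germ translated, so has the same $\mathcal K$-type as $m$ at $z$. The $\mathcal K$-simple types of germs $(\BR^2,0)\to(\BR^2,0)$ of corank at most $1$ are exactly the $A_k$ singularities. Applying Proposition \ref{prop:sectransv} with $X=\BS$, $Y=\BL$ and $\Gamma=\{(a,L):L\text{ has direction }a\}$, we conclude that for a generic $Z$ only $A_0$, $A_1$, $A_2$ occur, since $A_3$ has $\mathcal K$-codimension $3$ while the unfolding parameter space $\BS$ is $2$-dimensional. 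Moreover, the two independent translations of $(a_1^0,a_2^0)$ produce the first-order deformations $(1,0)$ and $(0,1)$ of the contact map, which span the $\mathcal K$-normal space in the $A_1$ and $A_2$ cases, yielding $\mathcal K$-versality.

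For part (3), $m:Z\to\BS$ is a smooth map between $2$-manifolds. By (1)-(2) its local singularities at a generic congruence are precisely the $\mathcal K$-types $A_1$ and $A_2$, which for maps $\BR^2\to\BR^2$ are Whitney's fold and cusp. To upgrade local stability to stability we need, in addition, the usual multi-local normal form of transverse intersections of fold curves. This follows by applying the multi-jet version of Proposition \ref{prop:sectransv}, which yields generic transversality of distinct fold branches under $m$. Combining these gives that $m$ is $\mathcal A$-stable for a generic congruence.

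The main obstacle is the identification of the contact map with the direction map germ up to $\mathcal K$-equivalence, together with verifying that translating the direction $a$ supplies a $\mathcal K$-versal unfolding; once this bookkeeping is in place, the rest reduces to Proposition \ref{prop:sectransv} and the classical Whitney classification of stable maps between $2$-manifolds.
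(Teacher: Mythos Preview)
Your argument is correct and follows the same route as the paper: identify the contact map with the direction-map germ $(u,v)\mapsto(a_1,a_2)$ and then invoke Proposition~\ref{prop:sectransv} for the generic list of $\mathcal K$-types and their versality. For part (3) the paper is terser still, simply noting that the direction component $a$ of the congruence is an arbitrary smooth map (subject only to nonvanishing), so Whitney's theorem gives stability of $m$ directly, without routing through the $\mathcal K$-classification or multi-jet transversality as you do.
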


\begin{proof}
{\rm
\noindent (1), (2) The set of lines in a given direction $a'$ are those of the form $[a',b]$, with $b$ arbitrary. As usual if we take $L$ to be $[e_3,0]$, and parametrise $\BL$ locally as $a=(a_1,a_2,1), b=(b_1,b_2,0)$ then $a'=e_3$ and the contact map for the congruence is $(a_1(u,v),a_2(u,v))$. This is singular when the direction map is singular. We deduce the remaining results from the transversality result.
\noindent (3) Aside from the condition $m\ne 0$ the map $m$ is arbitrary so the result is clear. 
}
\end{proof}

\subsection{Lines in parallel planes}

If $a\in \BS(\BR^{3*})$ then the planes $\a(x)=constant$ are parallel planes.

\begin{theo}\label{theo:linesothogtoadirection}
{\rm
\noindent (1) The set of lines in the planes $\a(x)=constant$ is tangent to the congruence $Z$ at $L$ if and only if the direction map at $L$  has a singularity, and this is degenerate when the direction map has a singularity worse than a fold, so generically a cusp.  

\noindent (2) For a generic congruence the contact at $L$ is of type $A_k$, for $0\le k\le 2$. These singularities are ${\cal K}$-versally unfolded within the family of lines through a point. 

\noindent (3) The discriminant of this family is dual to the discriminant of the lines in a given direction. Each smooth point of the critical set of the direction map has a tangent great circle and (one of) the poles traces out the discriminant of this family, as usual cusps corresponding to inflections, double points to bitangents. 
}
\end{theo}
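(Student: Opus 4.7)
\textbf{Proof plan for Theorem \ref{theo:linesothogtoadirection}.}
The strategy mirrors the proofs of Theorems \ref{theo:linesthroughapoint} and \ref{theo:linesinaplane}, combining a local contact computation with the transversality result Proposition \ref{prop:sectransv}. First, reduce to a normal form: by the affine action take $L=[e_3,0]$ and (allowing for singularities of the direction map) parametrise $Z$ near $L$ as $(a_1(u,v),a_2(u,v),1,b_1(u,v),b_2(u,v))$. The line $L$ is contained in a plane $\a(x)=\text{const}$ iff $\a(e_3)=0$, so the relevant normals are $\a=(c_1,c_2,0)$. The model hypersurface in $\BL$, the lines satisfying $\a(a)=0$, is cut out by the submersion $\psi_{\a}=\a\circ a$, and hence the contact map between $Z$ and this model at $L$ is
\[
h(u,v)=c_1a_1(u,v)+c_2a_2(u,v).
\]

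For part (1), $(0,0)$ is a critical point of $h$ iff $(c_1,c_2)$ lies in the left kernel of the Jacobian of the direction map $m=(a_1,a_2)$ at the origin, that is, iff $m$ is singular at $L$. The Hessian of $h$ restricted to the kernel direction of $Dm$ is nondegenerate precisely when $m$ has a fold, so $A_{\ge 2}$ contact occurs exactly when $m$ has a worse-than-fold singularity, which is generically a cusp.

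For part (2), apply Proposition \ref{prop:sectransv} with $X=\BR\BP(\BR^{3*})$ the space of normal directions, $Y=\BL$, and $\Gamma=\{(\a,L):\a(a(L))=0\}$; both projections are submersions. Classifying function germs $(\BR^2,0)\to(\BR,0)$ up to ${\cal K}$-equivalence, the orbits $A_k$ have codimension $k$; since, modulo the incidence condition, $\a$ varies in a $2$-parameter family, transversality gives $A_k$ for $0\le k\le 2$ generically while $A_{\ge 3}$ is avoided. ${\cal K}$-versality within this family reduces to verifying that the infinitesimal deformations $\partial h/\partial c_j=a_j$ span the required quotient of the local algebra of $h$; this is a direct calculation in the normal forms $h\sim v^2$ at a fold of $m$ and $h\sim uv+v^3$-type at a cusp.

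For part (3), the duality follows from classical spherical polarity. By (1), $\a$ lies in the discriminant iff $\a$ annihilates both the line direction $a$ and the $1$-dimensional image of $Dm$ at a critical point of $m$. Identifying $\BS(\BR^{3*})$ with $\BS^2$, at each smooth critical point of $m$ the allowed $\a$'s form a great circle whose pole is the (normalised) tangent direction to the critical image curve $m(C)\subset\BS^2$, the discriminant of Theorem \ref{theo:linesinadirection}. Thus the discriminant here is the polar dual of $m(C)$; by the standard duality between curves on $\BS^2$ and their polar envelopes, inflections of $m(C)$ (i.e.\ cusps of $m$) correspond to cusps of the polar, and bitangents of $m(C)$ (self-intersections) correspond to its double points.

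The main obstacle is the ${\cal K}$-versality claim in (2): confirming that a $2$-parameter family of $\a$'s really produces a versal unfolding at $A_2$ requires explicitly identifying local algebra generators in a normal form for the cusp of the direction map. The duality in (3) also needs care at the cusps of $m$, where the assigned ``tangent great circle'' degenerates and produces, as expected, the cusps of the dual discriminant.
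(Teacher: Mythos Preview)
Your approach is essentially the paper's: a local contact computation followed by the transversality result of Proposition~\ref{prop:sectransv}. The only cosmetic difference is that the paper works in the chart $(a_1,a_2,u,v)$ of Proposition~\ref{prop:transv}(4) and fixes $\alpha=e_1^*$, so the contact map is simply $h=a_1$ rather than your $h=c_1a_1+c_2a_2$; a rotation in the target of the direction map interchanges the two set-ups. Your discussion of (3) is considerably more explicit than the paper's, which records it as immediate.

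There is, however, a gap in your argument for the second clause of (1). You correctly note that $\mathrm{Hess}(h)$ restricted to the kernel direction of $Dm$ is nonzero precisely when $m$ is a fold. But that is \emph{not} the $A_1$ condition for $h$: Morse means the full $2\times2$ Hessian of $h$ is nondegenerate, and nondegeneracy on a single line is neither necessary nor sufficient for this. For example, with $a_1=u^2$, $a_2=v$ the direction map $m=(u^2,v)$ is a genuine fold at the origin, yet $h=a_1=u^2$ has degenerate Hessian and so gives $A_{\ge2}$ contact; conversely $a_1=uv$, $a_2=v$ gives $m$ worse than a fold while $h=uv$ is Morse. The equivalence ``degenerate contact $\Leftrightarrow$ $m$ worse than a fold'' therefore does not follow from the restricted-Hessian observation; it holds only for \emph{generic} congruences and must be extracted from the transversality argument in (2), not from a pointwise computation. (The paper's own proof makes the same leap, asserting that $a_{1uu}a_{1vv}-a_{1uv}^2=0$ is the worse-than-fold condition without further comment.)
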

\begin{proof}
{\rm
\noindent (1), (2) The line determined by $(a_1,a_2,b_1,b_2)$ is in one of the planes $e_1^*=const$ if and only if $a_1=0$. If we take $L$ to be $[e_3,0]$, and parametrise $Z$ locally as $a=(a_1,a_2,u,v)$ the condition for a singularity is $a_{1u}=a_{1v}=0$, and for degenerate contact in addition $a_{1uu}a_{1vv}-a_{1uv}^2=0$.  So we have a singular point of the direction map in the first case, and this is worse than a fold in the secodnd, and hence generically a cusp.  Part (3) is immediate. 
}
\end{proof}

\subsection{Lines in a plane through a point}

Here we are looking at contact of the congruence with curves so the relevant mappings are $\BR^2,0\to \BR^3,0$, again up to contact equivalence, in a $5$-parameter family. If we fix a point and a plane through that point, say $p=0, P=\{x_2=0\}$ then asking that a line in $\BL$ passes through that point and lies in that plane is imposing $3$ conditions on $Z$: taking the usual chart $[(a_1,a_2,1),(b_1,b_2,0)]$ this is $a_2=b_1=b_2=0$. The discriminant in this case is the set of pairs $(p,P)\in I(\BR^3,\BP)$ with $p\in L\subset \BP, L\in Z$.

\begin{theo}\label{theo:Linesinaplane}
{\rm
\noindent (1) The set of lines in a plane $P$ and through a point $p$ is tangent to the congruence $Z$ at $L$ if and only if $p$ is a focal point and $P$ a focal plane. 

\noindent (2) Locally the discriminant is smooth except along a codimension $2$ variety which correspond to the pairs $(p,P)$ where  $p$ a focal point, $P$ a focal plane.
Off a discrete set, a local model for the focal point and focal plane is a crosscap times $\BR^2$.

\noindent (3) At isolated points the discriminant has a model $\cal D$ given in the proof below; these correspond to points where we have both a degenerate focal point and focal plane. The subset of $A_1$ points is also given below and gives a surface with an isolated singularity. 
}
\end{theo}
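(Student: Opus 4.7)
The plan is to mimic the pattern of Theorems \ref{theo:linesthroughapoint} and \ref{theo:linesinaplane}, but now the model imposes codimension $3$ in $\BL$ and is parametrised by the $5$-dimensional incidence manifold $I(\BR^3,\BP)$, so we must invoke the second transversality result, Proposition \ref{prop:sectransv}, rather than the usual Thom jet transversality.

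Choose coordinates so that $L=[e_3,0]$ and fix the model pair $p=0$, $P=\{x_2=0\}$. Using the pre-normal form of Proposition \ref{prop:transv}(4), parametrise $Z$ as $(u,v,b_1(u,v),b_2(u,v))$. A line $[(a_1,a_2,1),(b_1,b_2,0)]$ lies in $P$ through $p$ precisely when $a_2=b_1=b_2=0$, so the contact map with the fixed model is
\[
F:(\BR^2,0)\to(\BR^3,0),\qquad F(u,v)=(v,b_1(u,v),b_2(u,v)).
\]
For (1), $F$ has corank $\geq 1$ at $0$ iff the $2\times 2$ minors of its Jacobian vanish, which reduces to $b_{1u}(0)=b_{2u}(0)=0$. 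By Proposition \ref{prop:flatdirections}(3) this is exactly the condition that $t=0$ solves the focal-point equation and $c=e_2^*$ solves the focal-plane equation. Geometrically, via Proposition \ref{theo:Tangentspace&Q_L}(5), the pair $(p,P)$ corresponds to a single point of $Q_L$, namely the intersection of the $W_p$-generator with the $V_P$-generator, and tangency of $Z$ with the model is equivalent to $\ell_z$ passing through that point, i.e.\ $(p,P)$ being a focal pair.

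For (2), apply Proposition \ref{prop:sectransv} with $X=I(\BR^3,\BP)$, $Y=\BL$ and $\Gamma=\{((p,P),L):p\in L\subset P\}$; both projections are submersions, and the contact map $F_{(p,P)}$ is a submersion in the target variables. The discriminant in $I(\BR^3,\BP)$ is the image of the smooth $4$-dimensional total incidence set $I_Z=\{(z,p,P):p\in L(z)\subset P\}$ under the projection. Multi-transversality of the jet of $F_{(p,P)}$ to the ${\cal K}$-stratification is generic; since source and target dimensions of the global projection $I_Z\to I(\BR^3,\BP)$ are $4$ and $5$, the only positive codimension stable singularity is of corank one, and Mather's classification forces the Whitney crosscap suspended trivially in the two remaining directions, namely $(u_1,u_2,u_3,v)\mapsto(u_1,u_2,u_3,v^2,u_1 v)$. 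This is the stated crosscap$\,\times\,\BR^2$ model at a generic focal pair.

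For (3), the isolated degenerate points are those where both the $A_{\geq 2}$ locus of the point-exponential map and that of the plane-exponential map meet. By Theorems \ref{theo:linesthroughapoint}(3) and \ref{theo:linesinaplane}(3) each is a curve on $Z$, so their intersection is generically discrete. At such a point $F$ acquires a higher Thom--Boardman stratum; substituting $b_{1u}=b_{2u}=0$ together with the further vanishing second-derivative conditions forcing $A_2$-type in both directions into the $3$-jet of $F$ yields the normal form $\cal D$, and the locus where the Hessian-level invariants remain non-degenerate cuts out the stated isolated $A_1$ surface singularity. The main obstacle will be this explicit ${\cal K}$-classification: one must show that the residual affine symmetries preserving the pre-normal form, combined with right equivalence, collapse the $3$-jet of $(b_1,b_2)$ to a single ${\cal K}$-orbit, so that no modulus survives and the model $\cal D$ is stable in the sense required by Proposition \ref{prop:sectransv}.
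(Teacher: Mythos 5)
Your part (1) is essentially the paper's argument: the same contact map $F(u,v)=(v,b_1,b_2)$, the same singularity condition $b_{1u}=b_{2u}=0$, and your identification of tangency with $\ell_z$ passing through the point $W_p\cap V_P$ of $Q_L$ correctly captures the pairing of focal point and plane. The trouble starts in (2) and (3). Your classification claim for the projection $I_Z\to I(\BR^3,\BP)$ is wrong: for stable maps from dimension $4$ to $5$, corank one does not force a trivially suspended crosscap. The $\Sigma^1$ locus has codimension $2$ (giving crosscap$\,\times\,\BR^2$ along a surface), but the deeper Morin stratum of local algebra type $A_2$ has codimension $4$ and therefore occurs stably at isolated points -- and these isolated points are exactly what part (3) of the theorem is about. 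As written, your argument in (2) would preclude the phenomenon that (3) asserts, so at best the crosscap conclusion holds only at generic points of the singular locus, which you must say and justify by excluding/deferring the $A_2$ stratum rather than by appealing to ``the only positive codimension stable singularity''.

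For (3) there is a genuine gap: the theorem promises an explicit model $\cal D$ and an explicit description of its $A_1$ subset ``given in the proof below'', and your proposal never produces either; you only assert that substituting the degeneracy conditions into the $3$-jet ``yields the normal form''. Moreover the route you sketch -- a bespoke ${\cal K}$-classification of the $3$-jet of $(b_1,b_2)$ under the residual affine symmetries, with worries about moduli -- is not what is needed. The paper's (simpler) mechanism is: since the germ has rank $\ge 1$, its ${\cal K}$-class is $(u^{k+1},v,0)$ of ${\cal K}$-codimension $2k+1$, so in the $5$-parameter family only $k\le 2$ occurs generically; Proposition \ref{prop:sectransv}(2) then makes the family a ${\cal K}$-versal unfolding, so the local model of the discriminant is that of the standard versal unfolding, namely for $A_1$ the crosscap $\{(\a_0,\b_1,\b_0)=(-v^2,\b_1,-\b_1v)\}$ times $\BR^2$, and for $A_2$ the set ${\cal D}=\{(\a_0,\a_1,\b_0,\b_1,\b_2)=(-v^3-\a_1v,\a_1,-\b_2v^2-\b_1v,\b_1,\b_2)\}$ coming from $(u,v^3+\a_1v+\a_0,\b_2v^2+\b_1v+\b_0)$, with $A_{\ge1}$ points along $(2v^3,-3v^2,\b_2v^2,-2\b_2v,\b_2)$. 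These computations are the content of (2)--(3) and must appear. A smaller imprecision: your identification of the $A_2$ points as the intersection of the $A_{\ge2}$ loci of the point- and plane-exponential maps is loose, since those loci are unions over both focal pairs at each $z$; the correct condition (in the adapted coordinates) is $b_{1uu}=b_{2uu}=0$ for the \emph{same} focal pair, which is what forces degeneracy of both the paired focal point and the paired focal plane.
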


\begin{proof}
{\rm
\noindent (1) Suppose that we are considering the point $p=0$ and the plane is $x_2=0$. The set of lines through $0$ in $x_2=0$ in the usual representation is given by $a_2=b_1=b_2=0$.  Working at a point $[e_3,0]$ the contact is given by the map $U\to \BR^3, (u,v)\mapsto (a_2,b_1,b_2)(u,v)$. As usual away from stall points $Z$ can be written locally as $(u,v,b_1,b_2)$ so we are considering $(v,b_1(u,v),b_2(u,v))$. The map has rank $\ge 1$ so we have map-germs of type $A_k$ that is ${\cal K}$-equivalent to $(u^{k+1},v,0)$ for some $k$. This germ has ${\cal K}$-codimension $2k+1$, so generically we will obtain singularities of type $A_0, A_1, A_2$. We have a singularity when $v=b_1=b_2=0=b_{1u}=b_{2u}=0$, but this is precisely the condition that $0$ is a focal point and $x_2=0$ a focal plane. 

\noindent (2), (3) Away from stall points the conditions for an $A_{\ge 2}$ are, in addition, $b_{juu}=0, j=1,2$. The set of lines through $p=0$ is given by $(b_1,b_2)=0$, and this mapping has $2$-jet 
\[
(b_{1v}v+B_1(u,v),b_{2v}v+B_2(u,v)), 
\]
with $B_1, B_2\in{\m}_2^2$. The two conditions clearly imply that the contact map has an $A_{\ge 2}$ singularity, that is the focal point is degenerate. The set of lines in $\{x_2=0\}$ on the other hand corresponds to $v=b_2(u,v)=0$, and $(v,b_{2uu})$ will also have an $A_{\ge 2}$ singular point. The converse is clear. 

The relevant discriminants are of codimension $1$ in the unfolding spaces: consider the versal unfoldings:
\[
\begin{array}{l}
A_1:(u,v^2+\a_0,\b_1v+\b_0),\\
{\cal D}= \{(\a_0,\b_1,\b_0)=(-v^2,\b_1,-\b_1v)\}, \, {\rm a\ crosscap}
\\
\\
A_2: (u,v^3+\a_1v+\a_0,\b_2v^2+\b_1v+\b_0), \\
{\cal D}=\{(\a_0,\a_1,\b_0,\b_1,\b_2)=(-v^3-\a_1v,\a_1,-\b_2v^2-\b_1v,\b_1,\b_2)\}.
\end{array}
\]
In the latter we have an $A_{\ge 1}$ point when $(\a_0,\a_1,\b_0,\b_1,\b_2)=(2v^3,-3v^2,\b_2v^2,-2\b_2v,\b_2)$.
}
\end{proof}

\subsection{Lines meeting a line}

The underlying geometry is determined by the incidence set
\[
I(L)=\{(L,L')\in \BL\times\BL:L\cap L'\ne \emptyset \ {\rm or\ parallel\ to\ }L\}.
\]
Taking the usual chart this is given by the set 
$$\{((\bar a,\bar b),(\bar a',\bar b')):a,a',b-b'\ {\rm linearly \ dependent}\},$$
that is,
\[
\begin{vmatrix}
a_1&a_2&1\\
a'_1&a'_2&1\\
b_1-b'_1&b_2-b'_2&0
\end{vmatrix}=0.
\]

Away from the diagonal this is a smooth $7$-dimensional manifold. Note that if $L'\ne L, L'\in I(L)$ then $I(L)\cap I(L')$ consists of the lines $L''$ in the plane spanned by $L,L'$.

\begin{theo}\label{theo:Linesmeetingaline}
{\rm
\noindent (1) The set $I(L)$ passes through $L(z)$ if and only if $L\in I(L(z))$, and, assuming $L\ne L(z)$, 
$I(L)$ is tangent to $Z$ at $L(z)$ if and only if $z$ is non-elliptic and $L$ is in the plane pencil of lines through one focal point and in the other focal plane.

\noindent (2) For generic $Z$ and for $L\ne L(z)$, the contact is of type $A_1, A_2, A_3, A_4$ or $D_4$ and the set $\{L\in \BL: I(L)\ {\rm is\ tangent\ to}\ Z\}$ is locally diffeomorphic to the corresponding discriminants. This set is the union of the focal pencils of lines of $Z$, that is the lines lying in the focal plane through the focal point. 

\noindent (3)  Parametrising $Z$ as usual at $[e_3,0]$ generically we have a $D_4$ if and only if choosing $b_{1u}=b_{2u}=0$ we have $b_{2uu}=b_{2uv}=0$.

\noindent (4) When $L=L(z)$ the contact of $I(L)$ with $Z$ is Morse unless $L$ is a hyperbolic point. 
}
\end{theo}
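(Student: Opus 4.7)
My plan is to handle the four parts of Theorem~\ref{theo:Linesmeetingaline} in sequence, with part~(1) providing the geometric input that the others exploit. For (1), with $L\ne L(z)$ and $L\in I(L(z))$, Proposition~\ref{theo:Tangentspace&Q_L}(5) identifies $T_{L(z)}I(L)$ with the tangent $2$-plane to $Q_{L(z)}$ in $\BP(T_{L(z)}\BL)$ at the point $q(L)$ corresponding to the pair $(x,P)$ with $x=L\cap L(z)$ and $P={\rm span}(L,L(z))$. Tangency of $I(L)$ to $Z$ at $L(z)$ thus translates into $\ell_z\subset T_{q(L)}Q_{L(z)}$. A tangent $2$-plane to $Q_L$ meets $Q_L$ exactly in its two generators through $q$, the point generator $W_x$ and the plane generator $V_P$, so any line in this $2$-plane must meet each generator; in particular $\ell_z\cap Q_{L(z)}\ne\emptyset$, forcing $z$ to be non-elliptic. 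The intersection on $W_x$ makes $x$ a focal point and the one on $V_P$ makes $P$ a focal plane, and because $q(L)$ itself need not lie on $\ell_z$ these two come from \emph{different} intersection points of $\ell_z$ with $Q_{L(z)}$, giving the crossed pairing: $L$ lies in the pencil through one focal point inside the \emph{other} focal plane.

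For (2), I would apply the second transversality theorem (Proposition~\ref{prop:sectransv}) with $X=Y=\BL$ and $\Gamma=\tilde I(\BL,\BL)$, whose local defining function off the diagonal is the $3\times 3$ determinant displayed at the start of this subsection. Since $\dim\BL=4$, for a residual set of embeddings the generic ${\cal K}$-classes realised by the contact germ $(\BR^2,0)\to(\BR,0)$ are the simple ones of ${\cal K}_e$-codimension at most $4$, namely $A_1,\ldots,A_4$ and $D_4$, and their ${\cal K}_e$-versal unfoldings in four parameters have the claimed discriminants; the identification of the tangency locus with the union of focal pencils is then immediate from~(1). For (3) I compute in the chart $(u,v,b_1(u,v),b_2(u,v))$ at $L(z)=[e_3,0]$, normalised so that $b_{1u}=b_{2u}=0$ at $(0,0)$ by Proposition~\ref{prop:transv}(1); the defining equation of $I(L)$ then gives
\[
f(u,v)=(v-a_2')(b_1-b_1')-(u-a_1')(b_2-b_2').
\]
The focal pencil through the second focal point $(0,0,-b_{2v})$ in the focal plane $\{x_2=0\}$ is parametrised by the free scalar $a_1'\ne 0$ as $L=[(a_1',0,1),(a_1'b_{2v},0,0)]$, and a direct computation at the origin gives $j^1f=0$ together with $f_{uu}=a_1'b_{2uu}$, $f_{uv}=a_1'b_{2uv}$, and $f_{vv}=2b_{1v}+a_1'b_{2vv}$. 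A $D_4$ singularity needs the full $2$-jet to vanish; the last equation can always be solved by choosing $a_1'=-2b_{1v}/b_{2vv}$ inside the pencil, while the first two (with $a_1'\ne 0$) give the honest $Z$-conditions $b_{2uu}(0,0)=b_{2uv}(0,0)=0$.

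For (4), the defining function $f$ of $I(L)$ has an $A_1$ singularity at the cone point $L$ whose $2$-jet is, up to a non-zero scalar, the quadratic form $Q$ on $T_L\BL$ cutting out $Q_L$ (Theorem~\ref{theo:singI(L)}). The ${\cal K}$-class of the contact is that of $g:=f|_Z$, whose $2$-jet at $z$ is the restriction $Q|_{T_zZ}$; this is non-degenerate precisely when $T_zZ$ is transverse to the quadric cone, i.e.\ when $\ell_z$ is not tangent to $Q_L$, and then $g$ is Morse, with definite Hessian at elliptic $z$ and indefinite Hessian at hyperbolic $z$, which is the stated distinction. The main obstacle is Part~(1): translating $\ell_z\subset T_{q(L)}Q_{L(z)}$ into the crossed pairing requires carefully using that any line in a tangent $2$-plane of a rank-four projective quadric meets the quadric exactly in the two rulings through the tangency point, so that the point-ruling and plane-ruling data are read off from the two distinct intersections of $\ell_z$ with $Q_{L(z)}$.
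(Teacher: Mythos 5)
Your route is essentially the paper's own: in (1) you identify $T_{L(z)}I(L)$ with the tangent plane to $Q_{L(z)}$ at the point corresponding to $(x,P)=(L\cap L(z),\mathrm{span}(L,L(z)))$ and read off the crossed pairing from the intersections of $\ell_z$ with the two generators (the paper does exactly this, after computing the polar plane explicitly); in (2) you invoke Proposition \ref{prop:sectransv} for the incidence variety away from the diagonal, as the paper does (small point: the correspondence $\Gamma\subset X\times Y$ should be $I(\BL,\BL)$ minus the diagonal, not $\tilde I(\BL,\BL)$, which lives in $\BL\times\BL\times\BR^3$, though off the diagonal they agree); in (4) your restriction of the cone's quadratic form to $T_zZ$ reproduces the paper's computation, and your conclusion (non-Morse exactly at parabolic points) matches the paper's proof, though not the literal wording of the statement (``hyperbolic''), which looks like a slip in the statement rather than in your argument.

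The genuine problem is the computation in (3). With $a_2'=0$, $b_2'=0$, $b_1'=a_1'b_{2v}(0,0)$ and $b_{1u}=b_{2u}=0$ at the origin, the $2$-jet of $f(u,v)=v(b_1-b_1')-(u-a_1')b_2$ is
\[
\tfrac12\,a_1'b_{2uu}\,u^2+\bigl(a_1'b_{2uv}-b_{2v}\bigr)uv+\bigl(b_{1v}+\tfrac12\,a_1'b_{2vv}\bigr)v^2,
\]
all coefficients evaluated at $(0,0)$. Your value $f_{uv}=a_1'b_{2uv}$ is missing the term $-b_{2v}(0,0)$ coming from differentiating $-(u-a_1')b_2$ once in $u$ and once in $v$ (your $f_{uu}$ and $f_{vv}$ are correct). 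With the correct mixed partial, vanishing of the $2$-jet gives $b_{2uu}=0$, $a_1'b_{2uv}=b_{2v}$ and $a_1'b_{2vv}=-2b_{1v}$, i.e.\ after eliminating $a_1'$ the conditions $b_{2uu}=0$ and $b_{2v}b_{2vv}+2b_{1v}b_{2uv}=0$; the stated condition $b_{2uu}=b_{2uv}=0$ would only drop out if additionally $b_{2v}(0,0)=0$, i.e.\ at a parabolic point. Be aware that the system displayed in the paper's own proof omits the same $b_{2v}$ term from its $F_{uv}$ equation, so your calculation reproduces the paper's line of argument, slip included; but as written the step does not hold, and the derivation (or the stated characterisation of the $D_4$ locus) needs to be rechecked at exactly this point.
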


\begin{proof}
{\rm
\noindent Given $L\in \BL$ we know that $I(L)$ is smooth manifold apart from an $A_1$-singularity ar $L$ (Theorem \ref{theo:singI(L)}). So we have a $4$-parameter family of such hypersurfaces. Given a congruence $Z\subset \BL$ and a point $L(z)\in Z$ we need to consider two types of phenomena: (a) the contact beween $Z$ and $I(L(z))$ at $L(z)$; (b) the contact between $I(L)$ and $Z$ at $L(z)$ for other $L\in I(L(z))$. 

\noindent (1) We first consider the case when $L\ne L(z)=L_0$; we can apply the transversality result in this case because $I(\BL,\BL)$ has the necessary properties away from the diagonal. Clearly $I(L)$ passes through $L_0$ if and only if $L\in I(L_0)$ and the lines in $I(L)$ through $L_0$ are, as above, those lying in the plane spanned by $L_0$ and $L$, a $2$-dimensional family.   Again we fix our target line $L_0$ to be $[e_3,0]$. With the usual parametrisation $L$ corresponding to $(A_1,A_2,B_1,B_2)$ which meets $L_0$ if and only if $A_1B_2-A_2B_1=0$, and so corresponds to a point on the quadric $Q_{L_0}$. Note that the lines giving the same point are those in the plane spanned by $L_0, L$ through $L_0\cap L$. The lines $L'$ meeting $L$ are given by $(\bar a',\bar b')=(a_1',a_2',b_1',b_2')$ satisfy
\[
\begin{array}{rcl}
F(a_1',a_2',b_1',b_2')&=&(A_1-a_1')(B_2-b_2')+(A_2-a_2')(B_1-b_1')\\
&=&
(A_2B_1-A_1B_2)+(B_2a_1'-B_1a_2'-A_2b_1'+A_1b_2') +(a_1'b_2'-a_2'b_1')\\
&=&0.
\end{array}
\]
The tangent space to this set at $[e_3,0]$ is obtained by differentiating with respect to $a_1',a_2',b_1',b_2'$ and then setting $[a',b']=[e_3,0]$, so is the kernel of $(-B_2,B_1,A_2,-A_1)^t$, the polar plane of $[A_1:A_2:B_1:B_2]$. This point lies on the quadric $Q_{L_0}$ as $L_0\in I(L)$, so we have the tangent plane to the quadric at the corresponding point, which is spanned by the generators at the tangency points. This can also be deduced from Theorem 3.3 (6). So, to recap, there is a $3$-parameter family of points $L\ne [e_3,0]$ with $[e_3,0]\in I(L)$, each determines a point of the quadric, with each point on the quadric determining a plane pencil corresponding to the associated point and plane. The tangent space to $I(L)$ at $L_0$ corresponds to the tangent space to the quadric at the point.

Now suppose given a congruence $Z$; at each point $z$ there is a line $\ell_z$ in $\BP(T_{L(z)}\BL)$ determined by $T_zZ$. The manifolds $Z$ and $I(L')$, with $L'\in I(L)$, are tangent if and only if $\ell_z$ is contained in the corresponding tangent plane to the quadric. The tangent plane to $Q_L$ contains two generators, which $\ell_z$ will generally meet at two points of $Q_L$. The generators correspond to a point $x$ and plane $P$ with $x\in L\subset P$. So $x$ is the focal point at one point of intersection and $P$ is the focal plane at the other. If $\ell _z$ passes through the point of tangency then it is tangent so $z$ is parabolic and the result still holds (repeated focal points and planes).

\noindent (2) This follows from the transversality result in Proposition \ref{prop:sectransv}.

\noindent (3) The $D_4$ (and worse) singularities are those of corank $2$. Substituting $(a_1',a_2',b_1',b_2')=(u,v,b_1,b_2)$ in the above we obtain (all evaluated at $(0,0)$) from $F=F_u=F_v=F_{uu}=F_{uv}=F_{vv}=0$:
\[
\begin{array}{l}
A_1B_2-A_2B_1=0,\\
B_2-A_2b_{1u}+A_1b_{2u}+b_2=B_1+A_2b_{1v}-A_1b_{2v}+b_1=0,\\
-A_2b_{1uu}+A_1b_{2uu}+2b_{2u}=-A_2b_{1uv}+A_1b_{2uv}-b_{1u}=-A_2b_{1vv}+A_1b_{2vv}-2b_{1v}=0.
\end{array}
\]
The second and third equations above determine $B_1, B_2$ in terms of $A_1, A_2$, and the first equation gives $A_1^2b_{2u}+A_1A_2(b_{2v}-b_{1u})-A_2^2b_{1v}=0$ at $(0,0)$, which implies that $(A_1,A_2)$ is in a torsal direction. So we are at a non-elliptic point and, as usual can supppose that $b_{1u}=b_{2u}=0$, assume the torsal direction gives $A_2=0$, we also have $B_2=0$ and obtain, 
\[
\begin{pmatrix}b_{1uu}&b_{2uu}&0\\ b_{1uv}&b_{2uv}&0\\b_{1vv}&b_{2vv}&-2b_{1v}\end{pmatrix}\begin{pmatrix}0\\A_1\\1\end{pmatrix}=\begin{pmatrix}0\\0\\0 \end{pmatrix}.
\]
so $b_{2uu}=b_{2uv}=0$.  If $b_{2vv}\ne 0$ this determines $A_1$ and hence $B_1$.  So there is a unique corresponding $L$. 

\noindent (4) Now consider the case $L=L_0$, we cannot apply the transversality result here because the diagonal in $I(\BL,\BL)$ is not smooth. Recall that parametrising $\BL$ as $a=(a_1,a_2,1),\ b= (b_1,b_2,0)$ the set of lines meeting $L_0=(e_3,0)$ is given by $a_1b_2-a_2b_1=0$ and we need to consider the restriction of this function and its perturbations to $Z$ at $[e_3,0]$. Writing $a_1=u, a_2=v, b_i=b_{iu}u+b_{iv}v+O(2)$, we obtain a $2$-jet $F(u,v)=u^2b_{2u}+uv(b_{2v}-b_{1u})-v^2b_{1v}$ and this is not Morse when its discriminant vanishes, the condition for a parabolic point. Suppose now we are at a parabolic point; generically one expects the contact to be of type $A_2$ and at isolated points of type $A_3$. In fact, we get an $A_3$-singularity when $b_{2uu}=0$, equivalently, the singular set of the focal surface crosses the parabolic curve. It does not makes sense to consider ${\cal K}$-versality because the unfolding space is a cone. 
}
\end{proof}


\section {Middle surface}\label{sec:midSurf}

Recall that the middle surface is the locus of points $(p+p')/2$ in $\BR^3$ with $p, p'$ pais of (real or imaginary) focal points, given in Proposition \ref{prop:middlepointsComplex}. Over the non-elliptic set this is the locus of midpoints of the focal points.
	
\begin{prop}\label{prop:singmiddlesurface}
{\rm 
\noindent (1) The middle surface of a generic congruence can have only cross-cap singularities and these occur at isolated points on the image of the parabolic curve away from the singularities of the focal surface when $b_{1u}=b_{2u}=b_{2v}=b_{1uu}+b_{2uv}=0$; see \mbox{\rm Figure \ref{fig:cross.middlesurface}}. 

\noindent (2) The focal and middle surfaces are tangential along the image of the parabolic curve.

\noindent (3) All regular points on the image of the parabolic curve are hyperbolic points of the middle surface.
}
\end{prop}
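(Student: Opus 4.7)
The plan is to use the explicit middle-surface parametrization from Proposition \ref{prop:flatdirections}(5),
\[
\psi(u,v) \;=\; (b_1,b_2,0)\;-\;\tfrac12\bigl(b_{1u}+b_{2v}\bigr)(u,v,1),
\]
and work at a parabolic point $L_0=[e_3,0]$ in the normalization of Proposition \ref{prop:transv}(1), so that $b_{1u}=b_{2u}=b_{2v}=0$ at $(u,v)=(0,0)$, with $b_{1v}\ne 0$ (the generic condition already used in Proposition \ref{prop:FocalSurf_at_Par}). A direct calculation then gives
\[
\psi_u(0,0)=\bigl(0,0,-\tfrac12(b_{1uu}+b_{2uv})\bigr),\qquad
\psi_v(0,0)=\bigl(b_{1v},0,-\tfrac12(b_{1uv}+b_{2vv})\bigr),
\]
so both tangent vectors lie in the plane $\{x_2=0\}$. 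This is the one formula on which everything else hinges.

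For part (2), Step 1 shows the tangent plane of the middle surface at the parabolic point is $\{x_2=0\}$, which is exactly the tangent plane to the focal surface computed in the proof of Proposition \ref{prop:FocalSurf_at_Par}. Hence the two surfaces share tangent planes along the whole image of the parabolic curve.

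For part (1), the parametrization $\psi$ has rank $2$ at the parabolic point precisely when $b_{1uu}+b_{2uv}\ne 0$, and drops to rank $1$ exactly when $b_{1uu}+b_{2uv}=0$. I would verify Whitney's cross-cap criterion by checking, via a $3\times 3$ determinant in the components of $\psi_{uu},\psi_{uv},\psi_v$, that for a generic congruence the rank-one points are transverse cross-caps. Genericity: the four equations $b_{1u}=b_{2u}=b_{2v}=b_{1uu}+b_{2uv}=0$ define an affine-invariant algebraic subset of the relevant jet space of codimension $4$, so by Proposition \ref{prop:transv}(3) these points are isolated on a generic congruence; combining this with the open condition $b_{1v}b_{2uv}\ne 0$ (from the proof of Proposition \ref{prop:FocalSurf_at_Par}) places them away from the singularities of the focal surface.

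For part (3) I would compute the second fundamental form of $\psi$ at a regular point on the image of the parabolic curve. With unit normal $\pm e_2$, the $x_2$-components of $\psi_{uu},\psi_{uv},\psi_{vv}$ at $(0,0)$ are
\[
b_{2uu},\qquad \tfrac12(b_{2uv}-b_{1uu}),\qquad -b_{1uv},
\]
so the sign of $\det(II)$ is that of $-b_{2uu}b_{1uv}-\tfrac14(b_{2uv}-b_{1uu})^2$. To force this to be negative I would differentiate the parabolic constraint $(b_{2v}-b_{1u})^2+4b_{1v}b_{2u}=0$ along the parabolic curve, combine with the regularity condition $b_{1uu}+b_{2uv}\ne 0$, and exploit the hyperbolicity of the focal surface at the same point (Proposition \ref{prop:FocalSurf_at_Par}) to pin down the sign relation between $b_{2uu}$ and $b_{1uv}$. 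The main obstacle of the whole proof sits here: unlike the analogous computation for the focal surface, where the Hessian had an entry forced to zero by the parametric form, here both the diagonal and off-diagonal entries are nonzero in general, and one really does have to extract the sign from the algebraic relations imposed by being on the parabolic curve rather than by an automatic cancellation.
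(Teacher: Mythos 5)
Your setup, the tangent-vector computation $\psi_u(0,0)=(0,0,-\tfrac12(b_{1uu}+b_{2uv}))$, $\psi_v(0,0)=(b_{1v},0,-\tfrac12(b_{1uv}+b_{2vv}))$, and part (2) all agree with the paper. But part (1) has two problems. First, the genericity count is wrong as stated: $b_{1u}=b_{2u}=0$ (and then $b_{2v}=0$) are the normalisation of Proposition \ref{prop:transv}(1) at a parabolic point, obtained by using the isotropy group of the line, not generic conditions, so the four equations do not give an invariant codimension-$4$ subset whose preimage consists of isolated points; transversality to an invariant set of codimension $4$ would make a generic $2$-dimensional congruence miss it entirely, i.e.\ your count would show cross-caps never occur. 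The invariant conditions are ``parabolic'' together with $b_{1uu}+b_{2uv}=0$, of codimension $2$, which is what makes the candidate points isolated; the paper obtains this (and also the fact, which you never address, that the middle surface is regular off the parabolic curve) by noting that a $2\times2$ minor of the Jacobian of $\eta$ is a multiple of the discriminant $(b_{1u}-b_{2v})^2+4b_{1v}b_{2u}$. Second, ``away from the singularities of the focal surface'' is the condition $b_{2uu}\ne0$ (Proposition \ref{prop:FocalSurf_at_Par}), not $b_{1v}b_{2uv}\ne0$; the paper handles it by observing that $b_{1uu}+b_{2uv}=0$ and $b_{2uu}=0$ are distinct conditions, hence generically disjoint. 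Also, the Whitney-criterion verification, which is where the actual cross-cap content lies (the paper carries out the coordinate changes and exhibits an explicit open condition on the $3$-jet), is only announced, not performed.

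Part (3) is the serious gap: you are right that, unlike the focal-surface case where the structural zero $H_{tt}=0$ forces an indefinite Hessian, nothing vanishes automatically here, but you then stop at a plan, and that plan cannot close the argument. Differentiating $(b_{2v}-b_{1u})^2+4b_{1v}b_{2u}=0$ along the parabolic curve only recovers the tangent line of that curve (its gradient at the point is $4b_{1v}(b_{2uu},b_{2uv})$ in the normalised frame) and imposes no relation among the second-order coefficients at the point; nor does hyperbolicity of the focal surface, which stems precisely from the structural zero you noted is absent here. Concretely, $b_1=v-uv$, $b_2=u^2+uv+v^2$ satisfies every open condition in play ($b_{1v}\ne0$, $b_{2uv}\ne0$, $b_{2uu}\ne0$, $b_{1uu}+b_{2uv}\ne0$), yet $-b_{2uu}b_{1uv}-\tfrac14(b_{2uv}-b_{1uu})^2=\tfrac74>0$, so the contact of the middle surface with $x_2=0$ is of type $A_1^+$ at that point: the sign is simply not determined by the relations you invoke. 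The paper settles (3) by asserting that $H(u,v)=b_2-\tfrac12(b_{1u}+b_{2v})v$ (whose Hessian entries are exactly your $b_{2uu}$, $\tfrac12(b_{2uv}-b_{1uu})$, $-b_{1uv}$) has an $A_1^-$ singularity; your own computation shows this is precisely the assertion that requires an argument, and your proposal does not supply one. As written, (3) is unproved and (1) is partly at the level of intentions.
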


\begin{figure}[htp]
	\begin{center}
		\includegraphics[scale=0.5]{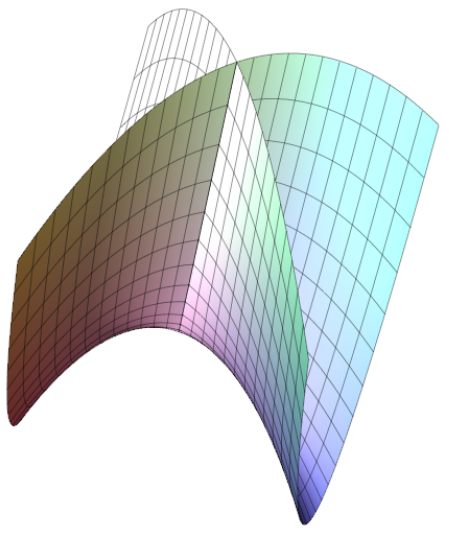}
		\caption{An example of a middle surface with a cross-cap singularity with 
			$b_1=2v + u^2 + 5vu - v^2 - 3u^3 - vu^2 + 7v^2u + v^3$ and $b_2=7u^2 - 2vu + 3v^2 + 5u^3 - 2vu^2 + 11v^2u + v^3$.}
		\label{fig:cross.middlesurface}
	\end{center}
\end{figure}
\begin{proof}
We parametrise the congruence in the usual way as $(u,v,b_1,b_2,)$.

\noindent (1) The middle surface is parametrised by 
\[
\eta(u,v)=(b_1(u,v),b_2(u,v),0)-\frac{1}{2}(b_{1u}(u,v)+b_{2v}(u,v))(u,v,1).
\]
The Jacobian matrix of $\eta$ at $(u,v)$ is 
\[
{\small
\left(
\begin{array}{ccc}
\frac{1}{2}(b_{1u}-b_{2v})-\frac12(b_{1uu}+b_{2uv})u&b_{2u}-\frac12 (b_{1uu}+b_{2uv})v&-\frac12 (b_{1uu}+b_{2uv})\\
b_{1v}-\frac12 (b_{1uv}+b_{2vv})u&\frac{1}{2}(b_{2v}-b_{1u})-\frac12(b_{1uv}+b_{2vv})v&-\frac12(b_{1uv}+b_{2vv})
\end{array}
\right).
}
\]
It has the same rank as the matrix 
\[
{\small
\left(
	\begin{array}{ccc}
		\frac{1}{2}(b_{1u}-b_{2v})&b_{2u}&-\frac12 (b_{1uu}+b_{2uv})\\
		b_{1v}&\frac{1}{2}(b_{2v}-b_{1u})&-\frac12(b_{1uv}+b_{2vv})
	\end{array}
	\right)
}
\]
which is singular if and only if its $2\times 2$ minors are zero. One of these minors  is $(b_{1u}-b_{2v})^2+4b_{1v}b_{2u}$ and vanishes precisely on the parabolic curve. For generic congruences the vanishing of two minors implies the vanishing of the third, so the singularities of the middle surface occur at isolated points on the parabolic curve.

We assume now that the origin is a parabolic point and take $b_{1u} =b_{2u}=b_{2v}=0$ and $b_{1v}\ne 0$ at $(0,0)$. Then the middle surface is singular at the origin if and only if $(b_{1uu} +b_{2uv})(0,0)=0$.
This condition is distinct from that for the focal surface to be singular (which is $b_{2uu}(0,0)=0$, see the proof of Proposition \ref{prop:FocalSurf_at_Par}).
\\
We write  $b_1=\sum_{k=1}^{3}\sum_{i=0}^kb^1_{ki}u^{k-i}v^i+O(4),$  $b_2=\sum_{k=1}^{3}\sum_{i=0}^kb^2_{ki}u^{k-i}v^i+O(4)$ 
and make changes of coordinates in the source and target. We find that the middle surface has a cross-cap singularity if and only if 
\[
(4b^1_{11}b^1_{31} + 4b^1_{11}b^2_{32} - (b^1_{21})^2 + 4(b^2_{22})^2)(3b^1_{11}b^1_{30} + 2b^1_{11}b^2_{20}+ b^1_{11}b^2_{31} - b^1_{20}b^1_{21} - 2b^1_{20}b^2_{22})\ne 0,
\]
The above condition is satisfied for generic congruences by Proposition \ref{prop:transv}.

\noindent (2) Taking $b_1,b_2$ as in Proposition \ref{prop:FocalSurf_at_Par} the tangent plane of the two surfaces at the origin is  $x_2=0$.

\noindent (3) With the above setting, the contact between the middle surface and its tangent plane $x_2=0$ at the origin is measured by the singularities of $H(u,v)=b_2(u,v)-\frac{1}{2}(b_{1u}(u,v)+b_{2v}(u,v))v$. This function has an $A_1^-$-singularity at the origin and the result follows.
\end{proof}
\begin{rem}
{\rm 
\noindent Away from the image of the parabolic curve, the focal and middle surfaces can have elliptic and parabolic points.
}
\end{rem}

\begin{acknow}
The work in this paper was partially supported by the FAPESP Thematic project grant 2019/07316-0.
\end{acknow}

\noindent 
JWB: Department of Mathematical Sciences,
The University of Liverpool,
Liverpool L69 3BX, UK.\\
E-mail: billbrucesingular@gmail.com\\

\noindent
FT: Instituto de Ciencias Matematicas e de Computacao - USP, Avenida Trabalhador sao-carlense, 400 - Centro,
CEP: 13566-590 - S\~ao Carlos - SP, Brazil.\\
E-mail: faridtari@icmc.usp.br

\end{document}